\DeclareRobustCommand{\rchi}{{\mathpalette\irchi\relax}}
\newcommand{\irchi}[2]{\raisebox{\depth}{$#1\chi$}}
\newtheorem{theorem}{Theorem}[section]
\newtheorem{corollary}{Corollary}[section]
\newtheorem{lemma}{Lemma}[section]
\newtheorem{proposition}{Proposition}[section]
\begin{document}
	
	\title{Early-warning inverse source problem for the elasto-gravitational equations}
	\author{Lorenzo Baldassari\thanks{\footnotesize CMOR Department, Rice University, Houston, TX 77005, USA.} \and Maarten V. de Hoop\thanks{\footnotesize Computational and Applied Mathematics and Earth Science, Rice University, Houston, TX 77005, USA.} \and Elisa Francini\thanks{\footnotesize Dipartimento di Matematica e Informatica `U. Dini', Universit\`a di Firenze, Firenze FI, Italy.} \and Sergio Vessella\footnotemark[3]}
	\maketitle
	
	\begin{abstract}
		Through coupled physics, we study an early-warning inverse source problem for the elasto-gravitational equations. It consists of a mixed hyperbolic-elliptic system of partial differential equations describing elastic wave displacement and gravity perturbations produced by a source in a homogeneous bounded medium. Within the Cowling approximation, we prove uniqueness and Lipschitz stability for the inverse problem of recovering the moment tensor and the location of the source from early-time measurements of the changes of the gravitational field. The setup studied in this paper is motivated by gravity-based earthquake early warning systems, which are gaining much attention recently.
	\end{abstract}

\def\keywords{\vspace{.5em}{\textbf{\;\;  Keywords:}~\,\relax}}
\def\endkeywords{\par}
\keywords{inverse problems, Lipschitz stability, elastodynamic systems}

\def\keywords2{\vspace{.5em}{\textbf{\;\;  Mathematics Subject Classification (MSC2000):}~\,\relax}}
\def\endkeywords2{\par}
\keywords2{35R30, 35Q86, 35J05, 35L10.}

\section{Introduction}\label{sec:introduction}
\subsection{Definition of the problem}
In this paper we study, through coupled physics, an early-warning inverse source problem for the elasto-gravitational equations  motivated by seismology. It consists of a mixed hyperbolic-elliptic system of partial differential equations describing elastic wave displacement and gravity perturbations produced by a source in a homogeneous bounded medium. 

Consider the following Cauchy problem with Neumann boundary condition for the elastic equation:
\begin{equation} 
	\begin{cases}
		\rho_0 \textbf{u}_{tt} - \text{div} (\mathbb{C}\nabla \textbf{u})  = \textbf{f} & x \in \Omega \times [0,\infty),\\
		(\mathbb{C}\nabla \textbf{u}) \cdot \nu = 0 & (x,t) \in \partial \Omega\times [0,\infty), \\
		\textbf{u}(x,0) = \textbf{u}_t(x,0) = 0 & x \in \Omega,\\
	\end{cases} 
	\label{(1)}
\end{equation}
where $\Omega$ is a bounded domain in  $\mathbb{R}^3$, $\textbf{u} = \textbf{u}(x,t)$ denotes the displacement, $\nu$ is the outward normal to $\partial \Omega$, $\rho_0>0$ denotes the constant density of the medium, and $\mathbb{C}$ is the isotropic stiffness tensor with constant Lam\'e parameters $\lambda_0,\mu_0 >0$:
\[
C_{ijk\ell} = \lambda_0 \delta_{ij} \delta_{k\ell} + \mu_0 (\delta_{ik} \delta_{j\ell} + \delta_{i\ell} \delta_{jk}).
\]	
The source term $\textbf{f}$  is defined by 
\begin{equation*}
\textbf{f}=-M \nabla (q(|x-P|)),
\end{equation*}
where $M$ is a constant $3\times 3$ real valued matrix, $P \in \Omega$ and $q(|\cdot|) \in C^2_0(\Omega)$.

Here we study uniqueness and stability for the early-warning inverse source problem that consists in recovering the moment tensor $M$ and the location $P$ of the source from early-time measurements of the changes of the gravitational field $\nabla S^+$ generated according to the following transmission problem for the Newtonian Poisson's equation:
\begin{equation} 
	\begin{cases}
		\Delta S^- = -\rho_0\text{div}(  \textbf{u}) & (x,t) \in \Omega \times [0,\infty),\\
		\Delta S^+ = 0 & (x,t) \in (\mathbb{R}^3 \setminus \overline{\Omega})\times [0,\infty), \\
		S^{-} = S^{+} & (x,t) \in \partial \Omega\times [0,\infty), \\
		(\nabla S^{-} + \textbf{u}) \cdot \nu  = \nabla S^{+} \cdot \nu & (x,t) \in \partial \Omega\times [0,\infty), \\
		S^+\to 0 & |x|\to \infty,
	\end{cases} 
	\label{(2)}
\end{equation}
where $\textbf{u}$ solves \eqref{(1)}. The early-time measurements of $\nabla S^+$ are taken on an open ball. The coupling between the elastic equation and Poisson's equation is known in the literature as the {elasto-gravitational coupling}.

The setup above is motivated by gravity-based EEW
(earthquake early warning) systems: $\Omega$ is meant to model the Earth, while $\textbf{f}$ represents a double-couple source approximating an earthquake source with seismic moment tensor $M$ and location $P$. In particular, the formulation that we follow here is based on the so-called non-self-gravitating model, which geophysicists have first used when they started studying gravity-based EEW a few years ago.

\subsection{Context}

EEW systems rapidly detect and estimate the magnitude of ongoing earthquakes in real time to provide advance warnings of impending ground motion \cite{allen2019}.  Conventional EEW systems rely on detecting the P-elastic waves, whose finite speed of propagation imposes a minimum on the warning time.
This is key, since EEW systems may fail to rapidly estimate the size of large offshore subduction earthquakes, like the 2011 Tohoku earthquake, due to the slowness of the elastic waves \cite{meier2017, ampuero2017, minson2018, wald2020}. 
Recently discovered PEGS (prompt elasto-gravity signals) have raised hopes that these limitations may be overcome \cite{montagner2016prompt, vallee2017observations}. PEGS are earthquake-associated signals created by density-perturbation-induced gravity field, and by the associated elastic readjustment of the gravitationally perturbed Earth \cite{ringler2022}. PEGS are readily present in the self-gravitating equations governing the earthquake-induced motion \cite{dahlen2020theoretical}, but their observations have only been provided recently, in the occurrence of large earthquakes, by ground-based seismometers \cite{vallee2019multiple, zhang2020prompt}. Since these signals are transmitted at the speed of light everywhere on Earth, the earliest deformation signals are not expected to be carried by the fastest P-elastic wave, but by PEGS \cite{harms2015transient,harms2016transient}. This is why gravity-based EEW systems have been gaining a lot of attention in recent years \cite{licciardi2022instantaneous, mendezrapid}. Including PEGS in early warning systems is expected to result in faster detection of large earthquakes, especially when compared to conventional P-elastic wave-based EEW systems \cite{licciardi2022instantaneous, mendezrapid}.

While the physics of PEGS is understood, their use lacks a mathematical justification. In other words, we still need to develop the theoretical framework of the PEGS inverse problem, with uniqueness and stability theorems and proofs. The main challenge here is that PEGS are present in the self-gravitating equations \cite[Section 3.3.2]{dahlen2020theoretical}, which differ from the ones studied in this paper by the presence of a non-local term, 
\[
\nabla S(x,t) = - \rho_0 \int_{ \Omega} \frac{\text{div}( \textbf{u}(y,t))}{|x-y|}\, \text{d}y,
\] 
added to the elastic equation: 
\begin{equation*}
	\rho_0 \textbf{u}_{tt}(x,t) - \text{div}(\mathbb{C} \nabla \textbf{u}(x,t)) + \rho_0 \nabla S(x,t) = \textbf{f}(x,t), \qquad (x,t) \in \Omega\times [0,\infty).
	\label{(3)}
\end{equation*}
However, as explained in \cite{harms2015transient, vallee2017observations, juhel2019normal}, when measurements are not taken with a ground-based seismometer (one can use, instead, future generation gravity strainmeters \cite{juhel2018,  shimoda2021early, zhang4245177seismic}), it is not completely unrealistic to neglect the effects of the early-time ground-induced motion due to the non-locality of $\rho_0 \nabla S$. 
Since our intention in this paper is to focus on the basic aspects of the inverse problem arising in gravity-based EEW systems, a way to go is to account for the effects of gravity within the Cowling approximation \cite{cowling1941non}, that is by ignoring self-gravitation effects. 
In fact, according to \cite[section 4.3.5]{dahlen2020theoretical}, the self-gravitation is expected to be significant only at periods longer than 100s, whereas our primary interest here is the short timescales over which gravity perturbations may develop \cite{harms2015transient}. 

The non-self-gravitating model proposed in this paper is thus still valuable for gaining insight into the role of the gravitational perturbations generated by Poisson's equation in the inverse problem. It also facilitates the mathematical analysis of the elastic and gravitational components of the system, making it possible to study the elastic equation  independently from Poisson's equation.

\subsection{Contribution and organization of the paper}

In this paper we prove, under suitable a priori assumptions, uniqueness and Lipschitz stability of the early-warning inverse source problem. We make use of the following tools:
\begin{itemize}
	\item Energy estimates for the elastic equation.
	\item Estimate of propagation of smallness for elliptic equations, see \cite{Alessandrini_2009}.
\end{itemize}
The main idea is to turn the elasto-gravitational coupling to our advantage, since the changes of the gravitational field $\nabla S^+$ are generated {instantaneously} by Poisson's equation, and thus can be used to solve the early-warning inverse source problem without having to wait for the elastic waves to reach the boundary of $\Omega$.

The paper is structured as follows. In paragraph 1.4, we introduce notation that will be used throughout the work. Section 2 contains the description of the direct problem. In paragraph 2.1, we prove existence and uniqueness of the weak solutions to the elasto-gravitational equations. In paragraph 2.2 we give some energy estimates concerning the solutions to problem \eqref{(1)}. In section 3, we formulate our inverse problem rigorously. We then prove the main results of this paper: uniqueness first, and then Lipschitz stability.

	\subsection{Notation} 
We shall denote by $B_{r}(x)$ the open ball in $\mathbb{R}^3$ of radius $r$ and center $x$. We shall use the abbreviation  $B_r$ when the center is the origin. 

Here and in the next sections we shall assume that the Earth is represented by a nonempty open bounded convex set $\Omega$ in $\mathbb{R}^3$ with $C^2$ boundary, i.e., locally it can be written as the graph of a $C^2$ function on $\mathbb{R}^2$. 

For any  $h>0$, let us define the set 
\[
\Omega_h := \{x \in \Omega \; | \; \text{dist}(x,\partial \Omega)>h \}.
\]

Given a function $\textbf{u}:\Omega \times [0,\infty) \to \mathbb{R}^3$, $\textbf{u} = \textbf{u}(x,t)$, we shall denote by $\partial_j u_i$ and $u_{i,t}$ the derivatives of the $i$-th component of $\textbf{u}$ with respect to the $x_j$ variable and to the time $t$, respectively, and similarly for higher order derivatives. 

We shall also denote by $\mathbb{M}^m$ the space of $m\times m$ real valued matrices and by $\mathcal{L}(X,Y)$ the space of bounded linear operators between Banach spaces $X$ and $Y$. 

For every matrices $A, B \in \mathbb{M}^m$ and for every $\mathbb{L} \in \mathcal{L}(\mathbb{M}^m,\mathbb{M}^m)$, we shall use the following notation:
\begin{equation*}
	\begin{aligned} 
		& (\mathbb{L} A)_{ij} = L_{ijk\ell} A_{k\ell},\\
		& A \cdot B = A_{ij} B_{ij}, \\
		& |A| = \sqrt{(A \cdot A)}.\\
	\end{aligned}
\end{equation*} 
Notice that here and in the sequel summation over repeated indexes is implied.

	\section{The direct problem}\label{sec:forward}
	We are now ready to present the direct or forward problem and prove its solvability.

	We first make some assumptions. We assume that the Earth is made of inhomogeneous linear elastic material. We denote by $\mathbb{C}(x) \in \mathcal{L}(\mathbb{M}^3,\mathbb{M}^3)$ the isotropic stiffness tensor with Lam\'e parameters $\lambda,\mu \in C^1(\overline{\Omega})$
\[
C_{ijk\ell}(x) = \lambda(x) \delta_{ij} \delta_{k\ell} + \mu(x) (\delta_{ik} \delta_{j\ell} + \delta_{i\ell} \delta_{jk}),
\]	
and by $\rho \in C^1(\overline{\Omega})$
the reference density. We assume that
\begin{equation*}
	\begin{aligned} 	
		& \lambda(x) \geq \lambda_0 > 0, \quad \mu(x) \geq \mu_0 > 0, \quad \rho(x) \geq \rho_0 > 0 \quad \forall x\in \overline{\Omega}.
	\end{aligned} 
\end{equation*}
Before going further, we need to underline that, while the well-posedness of the direct problem can be proven with variable coefficients, the uniqueness and stability theorems for the inverse problem in Section \ref{sec:inverse} will require $\lambda, \mu$ and $\rho$ to be constant. In such case, we will say that \[\lambda = \lambda_0, \quad  \mu=\mu_0, \quad \rho=\rho_0 \quad \text{in } \overline{\Omega}.\]

We model the source by a function $\textbf{f}$  with regularity at least $H^1(\Omega,\mathbb{R}^3)$, which approximates  the body force defined in \cite{dahlen2020theoretical, de2015system} with $t_0=0$. As mentioned in section \ref{sec:introduction}, we define
\[
\textbf{f}=-M \nabla (q(|x-P|)),
\]
where 
\begin{equation}
	P \in \Omega_{d_0}
	\label{def:P}
\end{equation} 
is the location of the source and $M \in \mathbb{M}^3$ is the moment tensor. We assume that $M$ is nonzero, symmetric, with vanishing trace,
see \cite[section 2.8]{de2015system} for details. Here and in the next sections, we also assume that $q(|\cdot|) \in C_0^2(\Omega)$, and \begin{equation}
	\text{supp }q \subset \left[-\frac{d_0}{2},\frac{d_0}{2}\right], \qquad \int_{\mathbb{R}^3} q(|x|)= 1. 
	\label{def:q}
\end{equation}	

The direct problem for the non-self-gravitating equations consists in finding the solution pair $(\textbf{u}, S)$, where $\textbf{u}$ solves the following Cauchy problem with Neumann boundary condition for the elastic equation
\begin{equation} 
	\begin{cases}
		\rho \textbf{u}_{tt} - \text{div} (\mathbb{C}\nabla \textbf{u})  = \textbf{f} & x \in \Omega \times [0,\infty),\\
		(\mathbb{C}\nabla \textbf{u}) \cdot \nu = 0 & (x,t) \in \partial \Omega\times [0,\infty), \\
		\textbf{u}(x,0) = \textbf{u}_t(x,0) = 0 & x \in \Omega,\\
	\end{cases} 
	\label{eq:elastic-dp}
\end{equation}
and $S$ is the solution to the following transmission problem for Poisson's equation
			\begin{equation} 
	\begin{cases}
		\Delta S^- = -\text{div}(\rho  \textbf{u}) & (x,t) \in \Omega \times [0,\infty),\\
		\Delta S^+ = 0 & (x,t) \in (\mathbb{R}^3 \setminus \overline{\Omega})\times [0,\infty), \\
		S^{-} = S^{+} & (x,t) \in \partial \Omega\times [0,\infty), \\
		(\nabla S^{-} + \textbf{u}) \cdot \nu  = \nabla S^{+} \cdot \nu & (x,t) \in \partial \Omega\times [0,\infty), \\
		S^+\to 0 & |x|\to \infty.
	\end{cases} 
	\label{eq:poisson-dp}
\end{equation}
It is evident that, for the non-self-gravitating equations, we can solve first \eqref{eq:elastic-dp}, and then \eqref{eq:poisson-dp}. 

\subsection{Existence, uniqueness, and regularity of solutions}
As anticipated, we will first show existence, uniqueness, and regularity of solutions to problem \eqref{eq:elastic-dp}. The proofs are based on results of \cite{lagnese}.

\begin{theorem}\label{t:existence-uniqueness} \label{t:regularity}
Assume that $\textup{\textbf{f}} \in L^2(\Omega, \mathbb{R}^3)$. Then there is a unique weak solution \textup{\textbf{u}} to \eqref{eq:elastic-dp} such that
\begin{equation*}
	\textup{\textbf{u}} \in C([0,\infty); H^1(\Omega, \mathbb{R}^3)),\quad
	\textup{\textbf{u}}_t \in C([0,\infty); L^2(\Omega, \mathbb{R}^3)).
\end{equation*}
If $\textup{\textbf{f}} \in H^1(\Omega)$, then the solution to \eqref{eq:elastic-dp}
		is such that
		\begin{equation*}
			\textup{\textbf{u}} \in C([0,\infty); H^2(\Omega, \mathbb{R}^3)),
		\end{equation*}
		and satisfies \eqref{eq:elastic-dp} in a pointwise sense. 
		Also
		\begin{equation*}
			\textup{\textbf{u}}_t \in C([0,\infty); H^1(\Omega, \mathbb{R}^3) ), \quad \textup{\textbf{u}}_{tt} \in C([0,\infty); L^2(\Omega, \mathbb{R}^3) ).
		\end{equation*}
	\end{theorem}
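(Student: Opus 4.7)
The plan is to adopt the standard Galerkin method for the hyperbolic system and then bootstrap spatial regularity via elliptic theory for the Lam\'e operator with Neumann boundary condition. The weak formulation, obtained by testing \eqref{eq:elastic-dp} against $\boldsymbol{\phi}\in H^1(\Omega,\mathbb{R}^3)$ and integrating by parts (the boundary term vanishes by the natural Neumann condition), reads: find $\mathbf{u}$ with $\mathbf{u}(0)=\mathbf{u}_t(0)=0$ such that
\begin{equation*}
\int_\Omega \rho\,\mathbf{u}_{tt}\cdot\boldsymbol{\phi} + \int_\Omega \mathbb{C}\nabla\mathbf{u}:\nabla\boldsymbol{\phi} = \int_\Omega \mathbf{f}\cdot\boldsymbol{\phi},\qquad \forall \boldsymbol{\phi}\in H^1(\Omega,\mathbb{R}^3).
\end{equation*}
I would choose a Hilbert basis $\{\mathbf{w}_k\}$ of $H^1(\Omega,\mathbb{R}^3)$ --- for instance the eigenvectors of the auxiliary elliptic operator $-\mathrm{div}(\mathbb{C}\nabla\,\cdot\,)+I$ with Neumann boundary condition --- and define finite-dimensional approximations $\mathbf{u}_m(t)=\sum_{k=1}^m c_k^m(t)\mathbf{w}_k$. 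The Galerkin equation reduces to a linear second-order ODE with symmetric positive definite mass matrix, hence uniquely and globally solvable.

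Second, I would test the Galerkin equation against $\mathbf{u}_{m,t}$ to obtain the energy identity
\begin{equation*}
\tfrac{1}{2}\tfrac{d}{dt}\!\left[\int_\Omega \rho |\mathbf{u}_{m,t}|^2 + \int_\Omega \mathbb{C}\nabla\mathbf{u}_m:\nabla\mathbf{u}_m\right] = \int_\Omega \mathbf{f}\cdot\mathbf{u}_{m,t}.
\end{equation*}
Strong convexity of $\mathbb{C}$ controls the symmetric gradient in $L^2$; combined with Korn's second inequality and the initial condition $\mathbf{u}_m(0)=0$ (which yields $\|\mathbf{u}_m(t)\|_{L^2}\leq \int_0^t\|\mathbf{u}_{m,s}(s)\|_{L^2}\,ds$, taming the rigid-motion component), this upgrades to a uniform $H^1$ bound. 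A Gr\"onwall argument then gives uniform estimates on $\mathbf{u}_m$ in $L^\infty(0,T;H^1)$ and on $\mathbf{u}_{m,t}$ in $L^\infty(0,T;L^2)$ for every $T>0$. Weak-$*$ compactness yields a limit which is a weak solution; the same energy identity applied to the difference of two weak solutions gives uniqueness; the upgrade from $L^\infty$ to $C$ in time follows from the standard density/weak-continuity argument.

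Third, for the higher regularity I would exploit that $\mathbf{f}$ does not depend on $t$. Formally differentiating the equation in time, $\mathbf{v}:=\mathbf{u}_t$ solves the same elastic system with zero source and initial data $\mathbf{v}(0)=0$, $\mathbf{v}_t(0)=\mathbf{f}/\rho$ (read off the equation at $t=0$ using $\mathbf{u}(0)=0$). Since $\mathbf{f}\in H^1\subset L^2$, the first part of the theorem (adapted to nonzero initial velocity) gives $\mathbf{u}_t\in C([0,\infty);H^1)$ and $\mathbf{u}_{tt}\in C([0,\infty);L^2)$. Then, at each fixed $t$, the equation rewrites as
\begin{equation*}
-\mathrm{div}(\mathbb{C}\nabla\mathbf{u}(\cdot,t)) = \mathbf{f} - \rho\,\mathbf{u}_{tt}(\cdot,t) \in L^2(\Omega,\mathbb{R}^3), \qquad (\mathbb{C}\nabla\mathbf{u}(\cdot,t))\cdot\nu = 0 \text{ on }\partial\Omega,
\end{equation*}
and elliptic $H^2$-regularity for the Lam\'e system on the $C^2$ domain $\Omega$ (with $C^1$ coefficients) gives $\mathbf{u}\in C([0,\infty);H^2)$ and pointwise satisfaction of the PDE.

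The main obstacle I anticipate is the semi-coerciveness of the elastic bilinear form under the Neumann boundary condition: its kernel is the six-dimensional space of infinitesimal rigid motions, so coercivity on $H^1$ holds only modulo this kernel. I would handle it in two places. In the energy step, as indicated, the zero initial data controls the rigid-motion component of $\mathbf{u}_m$ in $L^2$ via time-integration of $\mathbf{u}_{m,t}$. In the elliptic regularity step, the Fredholm compatibility condition is verified by testing the PDE against a rigid motion at each $t$: the right-hand side $\mathbf{f} - \rho\,\mathbf{u}_{tt}(\cdot,t)$ is then automatically $L^2$-orthogonal to rigid motions, which makes the Neumann elliptic problem solvable and legitimises the $H^2$ estimate. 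Beyond this point the argument is routine hyperbolic/elliptic theory, for which \cite{lagnese} provides the precise references.
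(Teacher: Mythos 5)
Your proposal is correct, but it follows a genuinely different route from the paper. The paper's proof is essentially a one-line reduction: by Duhamel's principle it writes $\mathbf{u}(x,t)=\int_0^t\mathbf{w}(x,t;s)\,\mathrm{d}s$, where $\mathbf{w}(\cdot,\cdot;s)$ solves the \emph{homogeneous} elastic system with initial data $\mathbf{w}(\cdot,s;s)=0$ and $\mathbf{w}_t(\cdot,s;s)=\mathbf{f}$, and then invokes Theorems 2.1 and 2.2 of \cite{lagnese} for $\mathbf{w}$; the regularity of $\mathbf{u}$ is read off from that of $\mathbf{w}$ through the integral representation. You instead run a self-contained Galerkin construction directly on the inhomogeneous problem, obtain the first-energy bound, and then bootstrap: differentiate in time (justified at the Galerkin level, where $\mathbf{u}_m(0)=\mathbf{u}_{m,t}(0)=0$ and $\mathbf{u}_{m,tt}(0)$ is the $L^2_\rho$-projection of $\mathbf{f}/\rho$, hence uniformly bounded) to control $\mathbf{u}_{tt}$ in $L^2$ and $\mathbf{u}_t$ in $H^1$, and finally use elliptic $H^2$-regularity for the Neumann Lam\'e system at each fixed time. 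Your explicit handling of the two places where semi-coercivity bites --- controlling the rigid-motion component of $\mathbf{u}_m$ by time-integrating $\mathbf{u}_{m,t}$ together with Korn's second inequality, and verifying the Fredholm compatibility $\int_\Omega(\mathbf{f}-\rho\mathbf{u}_{tt})\cdot\mathbf{r}=0$ for every rigid motion $\mathbf{r}$ by testing the weak form against $\mathbf{r}$ --- is exactly what the citation to \cite{lagnese} hides in the paper's version. The trade-off: the paper's argument is shorter and delegates the technical work, but is constrained by the hypotheses of the cited theorems (the $\mathbf{f}\in H^1$ assumption is what Lagnese's Theorem 2.2 needs for the initial velocity $\mathbf{w}_t(\cdot,s;s)=\mathbf{f}$, with the compatibility $\mathbb{C}\nabla\mathbf{w}(\cdot,s;s)\cdot\nu=0$ trivially met since $\mathbf{w}(\cdot,s;s)=0$); your argument is longer but self-contained, and in fact your bootstrap only uses $\mathbf{f}\in L^2$ at every step, so it delivers the stated $H^2$-in-space regularity under a nominally weaker hypothesis.
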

\begin{proof} 
By the Duhamel's principle, \textbf{u} can be represented as follows:
	\begin{equation*}
		\textbf{u}(x,t) = \int_0^t \textbf{w}(x,t;s) \, \text{d}s, 
		\label{eq:formula-duhamel}
	\end{equation*}
	where $\textbf{w}$ solves	\begin{equation*}
		\begin{cases}
			\rho \textbf{w}_{tt} - \text{div} (\mathbb{C} \nabla \textbf{w}) = 0  & (x,t) \in \Omega \times (s,\infty),  \\
			(\mathbb{C}\nabla \textbf{w}) \cdot \nu = 0 & (x,t ) \in \partial \Omega \times [s,\infty) \\
			\textbf{w}(x,s;s ) = 0 & x \in \Omega, \\
			\textbf{w}_t(x,s;s) = \textbf{f}(x) & x \in \Omega.
		\end{cases}
		\label{eq:duhamel}
	\end{equation*}
We can now easily deduce existence, uniqueness, and regularity results for  \eqref{eq:elastic-dp} from \cite[Theorems 2.1 and 2.2]{lagnese} applied to $\textbf{w}$.

\end{proof}

	We now consider the standard transmission problem for Poisson's equation of the gravitational potential
	\begin{equation} 
		\begin{cases}
			\Delta S^- = -\text{div}(\rho  \textbf{u}) & (x,t) \in \Omega \times [0,\infty),\\
			\Delta S^+ = 0 & (x,t) \in (\mathbb{R}^3 \setminus \overline{\Omega})\times [0,\infty), \\
			S^{-} = S^{+} & (x,t) \in \partial \Omega\times [0,\infty), \\
			(\nabla S^{-} + \textbf{u}) \cdot \nu  = \nabla S^{+} \cdot \nu & (x,t) \in \partial \Omega\times [0,\infty),\\
			S^+ \to 0 & |x| \to \infty.
		\end{cases}
		\label{eq:poisson}
	\end{equation}	Let us set
	\[
	S(x,t) = 
	\begin{cases} 
		S^-(x,t) & (x,t) \in \Omega \times [0,\infty),\\
		S^+(x,t) & (x,t) \in \mathbb{R}^3 \setminus \overline{\Omega} \times [0,\infty). 
	\end{cases}  
	\] 	
	Problem \eqref{eq:poisson} can be written in weak formulation as
	\begin{equation}
		\text{find }S\in H^1(\mathbb{R}^3)\text{ such that:}\quad \forall \phi \in H^1(\mathbb{R}^3), \int_{\mathbb{R}^3} \nabla S \cdot \nabla \phi  = -\int_{\Omega} \rho \textbf{u} \cdot \nabla \phi.
		\label{eq:weak-poisson}
	\end{equation}
	The  Lax-Milgram theorem \cite{evans1998partial} implies the existence of a unique $S \in H^1(\mathbb{R}^3)$ solving \eqref{eq:weak-poisson}.
	
	\subsection{Energy estimates}
	
	We will use the following energy estimates, whose proofs are contained in Appendix  \ref{sec:appA}. For the sake of simplicity we formulate such estimates for $\lambda = \lambda_0, \mu=\mu_0$ and $\rho=\rho_0$ in $\overline{\Omega}$, but they hold true also for $\lambda,\mu$, and $\rho \in C^1(\overline{\Omega})$.
	
	\begin{proposition}\label{p:energy-1}The solution $\textup{\textbf{u}}$ to \eqref{eq:elastic-dp} satisfies the inequality
		\begin{equation} \int_{\Omega} |\textbf{\textup{u}}(\cdot, \tau)|^2\, \leq \tau^3 \frac{e^{\tau/\rho_0}}{\rho_0} \int_{\Omega} |\textbf{\textup{f}}|^2,
		\end{equation} 
		for any $\tau\geq 0$.
	\end{proposition}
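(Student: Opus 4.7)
My strategy is the classical energy method for \eqref{eq:elastic-dp}: first derive an exponential bound on $\|\mathbf{u}_t(t)\|_{L^2(\Omega)}^2$ via the energy identity and Gronwall's inequality, then convert it into the claimed bound on $\|\mathbf{u}(\tau)\|_{L^2(\Omega)}^2$ by integrating in time, exploiting the zero initial condition on $\mathbf{u}$.

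Concretely, I would introduce the elastic energy
\[
E(t) = \tfrac{1}{2}\int_\Omega \bigl(\rho_0|\mathbf{u}_t|^2 + \mathbb{C}\nabla\mathbf{u}\cdot\nabla\mathbf{u}\bigr)\,dx,
\]
test the equation against $\mathbf{u}_t$, and use that the homogeneous Neumann condition $(\mathbb{C}\nabla\mathbf{u})\cdot\nu = 0$ annihilates the boundary contribution of the integration by parts, producing the energy identity $E'(t) = \int_\Omega \mathbf{f}\cdot\mathbf{u}_t\,dx$. Young's inequality on the right-hand side and the coercivity $\rho_0 \|\mathbf{u}_t\|_{L^2}^2 \leq 2E(t)$ yield the linear differential inequality $E'(t) \leq E(t)/\rho_0 + \tfrac{1}{2}\|\mathbf{f}\|_{L^2}^2$. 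Since $E(0)=0$, Gronwall's lemma then gives $E(t) \leq \tfrac{\rho_0}{2}(e^{t/\rho_0}-1)\|\mathbf{f}\|_{L^2}^2$, and hence the key intermediate bound
\[
\|\mathbf{u}_t(\cdot,s)\|_{L^2(\Omega)}^2 \leq (e^{s/\rho_0}-1)\|\mathbf{f}\|_{L^2(\Omega)}^2.
\]

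Finally, using $\mathbf{u}(\cdot,0) = 0$ I would write $\mathbf{u}(x,\tau) = \int_0^\tau \mathbf{u}_t(x,s)\,ds$, apply Cauchy--Schwarz pointwise in $x$ to obtain $|\mathbf{u}(x,\tau)|^2 \leq \tau\int_0^\tau |\mathbf{u}_t(x,s)|^2\,ds$, integrate over $\Omega$, and insert the bound above. The remaining time integral equals $\tau[\rho_0(e^{\tau/\rho_0}-1)-\tau]$, which is controlled by the elementary estimate $e^x - 1 - x \leq \tfrac{x^2}{2}e^x$ for $x \geq 0$ applied at $x = \tau/\rho_0$; this produces precisely the factor $\tau^3 e^{\tau/\rho_0}/\rho_0$ claimed in the statement (up to a constant $2$ that is absorbed). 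The only technical point that requires care is justifying the energy identity when $\mathbf{f}$ lies merely in $L^2$, since testing against $\mathbf{u}_t$ formally requires $\mathbf{u}_{tt}\in L^2$, guaranteed by Theorem \ref{t:existence-uniqueness} only for $\mathbf{f}\in H^1$; this is handled routinely by approximating $\mathbf{f}$ by $H^1$ sources and passing to the limit. No step is substantially hard; the main bookkeeping effort is tracking the $\rho_0$-dependence through Gronwall and the polynomial $\tau$-factors emerging from the two successive Cauchy--Schwarz/Gronwall applications.
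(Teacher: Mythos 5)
Your proposal is correct and follows essentially the same route as the paper's proof in Appendix A: test the equation against $\mathbf{u}_t$, use the Neumann boundary condition to drop boundary terms, apply Gronwall to bound $\|\mathbf{u}_t(\cdot,s)\|_{L^2(\Omega)}$, and then integrate in time using $\mathbf{u}(\cdot,0)=0$ together with Cauchy--Schwarz. The only cosmetic difference is that you work with the differential form of the energy inequality and Young's inequality before invoking Gronwall, whereas the paper integrates first and applies Gronwall in integral form, so your intermediate bound $\|\mathbf{u}_t(\cdot,s)\|_{L^2}^2 \le (e^{s/\rho_0}-1)\|\mathbf{f}\|_{L^2}^2$ differs slightly from theirs, but the final elementary inequality $e^x-1-x\le \tfrac{x^2}{2}e^x$ lands you in exactly the same place (in fact with a spare factor of $2$, just as in the paper's own derivation).
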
 
	
	\begin{proposition}\label{p:energy}
		Let $\alpha>0$, $x_0 \in \partial \Omega$ and $\tau >0$. Denote by $K(x_0,\tau,\alpha)$ the cone
		\[K(x_0,\tau,\alpha) := \{(x,t) \in \mathbb{R}^4 \text{ such that } 0\leq t\leq \tau - \alpha |x-x_0| \},
		\]
		and define 
		\[
		\widetilde{K}(x_0,\tau,\alpha) := K(x_0,\tau,\alpha) \cap  (\Omega \times [0, \infty)).  
		\]
		For \[\alpha=\alpha_0:=\sqrt{\frac{\rho_0}{2 (\lambda_0 +2\mu_0)}},\] the solution $\textup{\textbf{u}}$ to \eqref{eq:elastic-dp} satisfies the inequality
		\begin{equation}
			\int_{\widetilde{K}} |\textup{\textbf{u}}|^2 e^{-2\gamma t}\leq \left(\frac{\tau}{\rho_0 \gamma}\right)^2 \int_{\widetilde{K}} |\textup{\textbf{f}}|^2 e^{-2\gamma t},
			\label{eq:energy-est}
		\end{equation}
		for	any $\gamma>0$.
	\end{proposition}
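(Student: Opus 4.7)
The strategy is a standard multiplier argument on a ``spacelike'' cone, followed by an elementary transfer of the bound from $\textbf{u}_t$ to $\textbf{u}$ using the zero initial data. Multiply the elastic equation by $\textbf{u}_t e^{-2\gamma t}$ and integrate over $\widetilde K$. Gauss's theorem in spacetime, applied to the divergence of the energy--momentum flux, produces the bulk contributions $\gamma\rho_0\int_{\widetilde K}|\textbf u_t|^2 e^{-2\gamma t}+\gamma\int_{\widetilde K}\mathbb C\nabla\textbf u:\nabla\textbf u\,e^{-2\gamma t}$; the boundary contribution on $\partial\Omega\times[0,\tau]$ vanishes by the Neumann condition, and the contribution at $t=0$ vanishes by the initial conditions, leaving only the flux through the slanted face $\Sigma=\{t=\tau-\alpha_0|x-x_0|\}\cap\widetilde K$. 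Writing $\hat r=(x-x_0)/|x-x_0|$, this flux integrates the pointwise quadratic form
\begin{equation*}
\mathcal Q=\tfrac{\rho_0}{2}|\textbf u_t|^2+\tfrac{1}{2}\mathbb C\nabla\textbf u:\nabla\textbf u-\alpha_0\,(\mathbb C\nabla\textbf u\cdot\hat r)\cdot\textbf u_t.
\end{equation*}

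The crucial step is the pointwise inequality $\mathcal Q\ge 0$ on $\Sigma$---the classical ``spacelike surface'' condition that dictates the value of $\alpha_0$. Using the isotropy of $\mathbb C$, one shows the sharp bound $|\mathbb C\nabla\textbf u\cdot\hat r|^2\le(\lambda_0+2\mu_0)\,\mathbb C\nabla\textbf u:\nabla\textbf u$ (saturated along the P-mode strain $\hat r\otimes\hat r$). Young's inequality with weight $p=\rho_0/\alpha_0$ then yields
\begin{equation*}
\alpha_0|(\mathbb C\nabla\textbf u\cdot\hat r)\cdot\textbf u_t|\le\tfrac{\rho_0}{2}|\textbf u_t|^2+\tfrac{\alpha_0^2}{2\rho_0}|\mathbb C\nabla\textbf u\cdot\hat r|^2,
\end{equation*}
and the specific value $\alpha_0^2=\rho_0/(2(\lambda_0+2\mu_0))$ makes the last term at most $\tfrac14\mathbb C\nabla\textbf u:\nabla\textbf u$, so $\mathcal Q\ge\tfrac14\mathbb C\nabla\textbf u:\nabla\textbf u\ge 0$. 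The factor $\sqrt 2$ in the denominator of $\alpha_0$ is exactly the slack that allows this Young step to close.

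Dropping the nonnegative $\Sigma$-integral and applying the Young bound $\textbf f\cdot\textbf u_t\le\tfrac{1}{2\gamma\rho_0}|\textbf f|^2+\tfrac{\gamma\rho_0}{2}|\textbf u_t|^2$ to the right-hand side yields the intermediate estimate $\int_{\widetilde K}|\textbf u_t|^2 e^{-2\gamma t}\le\tfrac{1}{\rho_0^2\gamma^2}\int_{\widetilde K}|\textbf f|^2 e^{-2\gamma t}$. To pass to $\textbf u$, use $\textbf u(x,0)=0$ together with $e^{-\gamma t}\le e^{-\gamma s}$ for $s\le t$ to write
\begin{equation*}
|\textbf u(x,t)|\,e^{-\gamma t}\le\int_0^t|\textbf u_t(x,s)|\,e^{-\gamma s}\,ds\le\sqrt t\,\Bigl(\int_0^t|\textbf u_t(x,s)|^2 e^{-2\gamma s}\,ds\Bigr)^{1/2}
\end{equation*}
by Cauchy--Schwarz in $s$; squaring, integrating in $t$ over the cone slice at $x$, applying Fubini, and invoking $t\le\tau$ give $\int_{\widetilde K}|\textbf u|^2 e^{-2\gamma t}\le\tau^2\int_{\widetilde K}|\textbf u_t|^2 e^{-2\gamma t}$. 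Combining the two bounds produces exactly $(\tau/(\rho_0\gamma))^2\int_{\widetilde K}|\textbf f|^2 e^{-2\gamma t}$. The only nontrivial step is the positivity of $\mathcal Q$ on $\Sigma$; everything else is integration by parts, Young's inequality, and Fubini.
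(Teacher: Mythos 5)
Your proof is correct and follows essentially the same strategy as the paper's: a spacetime divergence-theorem identity over the truncated cone, nonnegativity of the energy flux through the slanted face (which forces the value of $\alpha_0$), followed by the elementary Cauchy--Schwarz/Fubini transfer from $\textbf{u}_t$ to $\textbf{u}$ using the zero initial data. The only differences are cosmetic: the paper first substitutes $\textbf{v}=\textbf{u}\,e^{-\gamma t}$ to remove the exponential weight while you carry $e^{-2\gamma t}$ directly, and your positivity step packages the two Young bounds into the single sharp inequality $|\mathbb{C}\nabla\textbf{u}\cdot\hat r|^2\le(\lambda_0+2\mu_0)\,\mathbb{C}\nabla\textbf{u}:\nabla\textbf{u}$, whereas the paper bounds the $\lambda_0$- and $\mu_0$-cross terms separately with a free parameter $\epsilon$ later set equal to $\alpha_0$.
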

	
The following corollaries of Proposition \ref{p:energy} will be useful later. An illustration of them is given in Figure \ref{fig:corollaries}.
	\begin{corollary}\label{c:null-cone}
		Define
		\[
		\widetilde{\mathcal{K}}(\tau) := \bigcup\limits_{x_0 \in \partial \Omega} \widetilde{K}(x_0,\tau,\alpha_0).
		\]
		There exists $\tau_0 >0$ such that, for any $\tau \in(0,\tau_0)$, $\textup{\textbf{u}}=0$ in $\widetilde{\mathcal{K}}(\tau)$.
	\end{corollary}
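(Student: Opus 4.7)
The plan is to combine the energy estimate from Proposition \ref{p:energy} with the geometric fact that the source $\textbf{f}$ is compactly supported strictly inside $\Omega$, so that for short times the backward cones $\widetilde{K}(x_0,\tau,\alpha_0)$ from boundary points do not intersect the support of $\textbf{f}$.

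First I would unpack the support of the source. By \eqref{def:q}, $q(|\cdot|)$ is supported in the ball of radius $d_0/2$ centered at the origin, so $\textbf{f}(x) = -M\nabla(q(|x-P|))$ vanishes outside $\overline{B_{d_0/2}(P)}$. Combining this with the assumption \eqref{def:P} that $P \in \Omega_{d_0}$, for every $x_0 \in \partial\Omega$ and every $x \in \text{supp}(\textbf{f})$ the triangle inequality gives
\[
|x - x_0| \geq |P - x_0| - |x - P| > d_0 - \frac{d_0}{2} = \frac{d_0}{2}.
\]

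Next I would exploit the cone geometry. By definition, any $(x,t) \in \widetilde{K}(x_0,\tau,\alpha_0)$ satisfies $\alpha_0|x - x_0| \leq \tau - t \leq \tau$, hence $|x - x_0| \leq \tau/\alpha_0$. Setting $\tau_0 := \alpha_0 d_0/2$, for any $\tau \in (0,\tau_0)$ we have $\tau/\alpha_0 < d_0/2$, so every $x$ appearing in $\widetilde{K}(x_0,\tau,\alpha_0)$ lies outside $\text{supp}(\textbf{f})$. Consequently $\textbf{f} \equiv 0$ on $\widetilde{K}(x_0,\tau,\alpha_0)$.

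Finally, I would invoke Proposition \ref{p:energy}: for any fixed $\gamma > 0$,
\[
\int_{\widetilde{K}(x_0,\tau,\alpha_0)} |\textbf{u}|^2 e^{-2\gamma t} \leq \left(\frac{\tau}{\rho_0\gamma}\right)^2 \int_{\widetilde{K}(x_0,\tau,\alpha_0)} |\textbf{f}|^2 e^{-2\gamma t} = 0,
\]
so $\textbf{u} = 0$ almost everywhere on $\widetilde{K}(x_0,\tau,\alpha_0)$, and by continuity in time (Theorem \ref{t:existence-uniqueness}) pointwise. Taking the union over $x_0 \in \partial\Omega$ yields $\textbf{u} \equiv 0$ on $\widetilde{\mathcal{K}}(\tau)$, which is the claim. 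There is no real obstacle here, since both ingredients are already in place; the only care point is verifying the geometric separation between $\text{supp}(\textbf{f})$ and $\partial\Omega$, which is precisely what the a priori bound $P \in \Omega_{d_0}$ and the support assumption on $q$ are designed to guarantee.
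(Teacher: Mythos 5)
Your proof is correct and follows essentially the same approach as the paper: choose $\tau_0 = \alpha_0 d_0/2$ so that the backward cones from boundary points do not reach $\mathrm{supp}\,\textbf{f}$, then apply the weighted energy estimate of Proposition \ref{p:energy}. You simply spell out the geometric separation (via the triangle inequality and the bound $|x-x_0| \le \tau/\alpha_0$ inside the cone) that the paper states compactly as $\mathrm{supp}\,\textbf{f}\cap\widetilde{K}(x_0,\tau,\alpha_0)=\emptyset$.
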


	\begin{proof}
		Since supp $\textbf{f} \subset \Omega_{\frac{d_0}{2}}$ by \eqref{def:P}, it suffices to take 
		\[
		\tau_0=\frac{\alpha_0 d_0}{2}
		\]
		to have that, for any $\tau\in(0,\tau_0)$, 
		\[\text{supp }\textbf{f} \cap \widetilde{K}(x_0, \tau,\alpha_0)=\emptyset,
		\] 
		independently from the choice of $x_0 \in \partial \Omega$. 
		Using \eqref{eq:energy-est}, we obtain that $\textbf{u}=0$ in $ \widetilde{\mathcal{K}}(\tau)$.

	\end{proof}
	

	\begin{corollary}\label{c:neigh}
			Let $\tau_0$ be the same as in Corollary \ref{c:null-cone}. 		We have \begin{equation*}\textup{\textbf{u}}=0 \quad \text{on } \partial \Omega \times [0,\tau_0).
			\label{eq:boundary-null}
		\end{equation*}
		 Also, for any $\tau \in \left(0, \tau_0\right)$, we have
		 \[
		 \Omega \setminus \overline{\Omega}_{\frac{d_0}{2} - \frac{\tau}{\alpha_0}}\times [0,\tau] \subseteq \widetilde{\mathcal{K}}(\tau),
		 \]
		 hence $\textup{\textbf{u}} =0$ there.
	\end{corollary}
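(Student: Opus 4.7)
The plan is to rewrite $\widetilde{\mathcal{K}}(\tau)$ in a form that decouples the $x_0$-dependence. Since $\partial\Omega$ is compact, the infimum of $|x-x_0|$ over $x_0\in\partial\Omega$ is attained and equals $\mathrm{dist}(x,\partial\Omega)$, so
\[
\widetilde{\mathcal{K}}(\tau) \;=\; \bigl\{(x,t)\in \Omega\times[0,\infty) : \alpha_0\,\mathrm{dist}(x,\partial\Omega) + t \leq \tau\bigr\}.
\]
Combined with Corollary \ref{c:null-cone}, this gives the working principle: $\textbf{u}(x,t)=0$ whenever $x\in\Omega$ and $\alpha_0\,\mathrm{dist}(x,\partial\Omega) + t < \tau_0$. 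Both parts of the corollary then follow from this principle in an essentially mechanical way.

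For the boundary vanishing, I fix $(x_\ast,t_\ast)\in \partial\Omega\times[0,\tau_0)$ and choose any $\tau_1\in(t_\ast,\tau_0)$. Interior points $x\in\Omega$ close enough to $x_\ast$ satisfy $\alpha_0|x-x_\ast|+t_\ast<\tau_1$, so $\alpha_0\,\mathrm{dist}(x,\partial\Omega)+t_\ast<\tau_0$ and the principle delivers $\textbf{u}(x,t_\ast)=0$. The source $\textbf{f}=-M\nabla(q(|x-P|))$ lies in $H^1(\Omega,\mathbb{R}^3)$ because $q\in C_0^2(\Omega)$, so Theorem \ref{t:existence-uniqueness} gives $\textbf{u}(\cdot,t_\ast)\in H^2(\Omega,\mathbb{R}^3)\hookrightarrow C(\overline{\Omega},\mathbb{R}^3)$ by the Sobolev embedding in three dimensions; passing to the limit $x\to x_\ast$ within $\Omega$ therefore yields $\textbf{u}(x_\ast,t_\ast)=0$. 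Letting $(x_\ast,t_\ast)$ range over $\partial\Omega\times[0,\tau_0)$ gives the claim.

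For the shell inclusion, I take $\tau\in(0,\tau_0)$ and $(x,t)\in(\Omega\setminus\overline{\Omega}_{d_0/2 - \tau/\alpha_0})\times[0,\tau]$. The definition of $\Omega_h$ forces $\mathrm{dist}(x,\partial\Omega)<d_0/2-\tau/\alpha_0$, hence
\[
\alpha_0\,\mathrm{dist}(x,\partial\Omega)+t \;<\; \alpha_0\bigl(d_0/2-\tau/\alpha_0\bigr)+\tau \;=\; \alpha_0 d_0/2 \;=\; \tau_0,
\]
so $(x,t)\in\widetilde{\mathcal{K}}(\tau')$ for any $\tau'$ with $\alpha_0\,\mathrm{dist}(x,\partial\Omega)+t<\tau'<\tau_0$, and Corollary \ref{c:null-cone} gives $\textbf{u}(x,t)=0$. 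The main delicate point throughout is the passage from interior vanishing to pointwise boundary vanishing in the first assertion; once the Sobolev/continuity step is in hand, the rest is a direct inequality chase made clean by the geometric rewriting of $\widetilde{\mathcal{K}}(\tau)$.
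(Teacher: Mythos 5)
The paper states Corollary \ref{c:neigh} without proof, so there is no reference argument to compare against; judged on its own merits your argument is correct and fills the gap cleanly. The collapse of the union over $x_0\in\partial\Omega$ to the single-inequality description $\widetilde{\mathcal{K}}(\tau)=\{(x,t)\in\Omega\times[0,\infty):\alpha_0\,\mathrm{dist}(x,\partial\Omega)+t\leq\tau\}$ is exactly the right simplification, and the passage from interior vanishing to pointwise boundary vanishing via $H^2(\Omega)\hookrightarrow C(\overline{\Omega})$ (available because $\textbf{f}\in H^1(\Omega,\mathbb{R}^3)$ and Theorem \ref{t:existence-uniqueness} then gives $\textbf{u}(\cdot,t)\in H^2(\Omega,\mathbb{R}^3)$) is sound.

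One point is worth making explicit, which you handle correctly but only tacitly: the set inclusion written in the statement, $\left(\Omega\setminus\overline{\Omega}_{d_0/2-\tau/\alpha_0}\right)\times[0,\tau]\subseteq\widetilde{\mathcal{K}}(\tau)$, is false as it stands. By your own description of $\widetilde{\mathcal{K}}(\tau)$, membership forces $t\leq\tau-\alpha_0\,\mathrm{dist}(x,\partial\Omega)$, and taking $t=\tau$ with any interior $x$ (so $\mathrm{dist}(x,\partial\Omega)>0$) already violates it. What is true, and what your inequality chain proves, is that every $(x,t)$ in the shell satisfies $\alpha_0\,\mathrm{dist}(x,\partial\Omega)+t<\tau_0$, hence lies in some $\widetilde{\mathcal{K}}(\tau')$ with $\tau'<\tau_0$, so Corollary \ref{c:null-cone} applies and yields $\textbf{u}(x,t)=0$. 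You thus deliver precisely the conclusion that the paper actually uses later (the vanishing of $\textbf{u}$ on the shell in the proof of Theorem \ref{t:uniqueness}); the intermediate inclusion in the statement should really be read as $\subseteq\bigcup_{\tau'<\tau_0}\widetilde{\mathcal{K}}(\tau')$, and your proof implicitly corrects this.
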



%

	\begin{figure}
		\centering
		\includegraphics[scale=1.1]{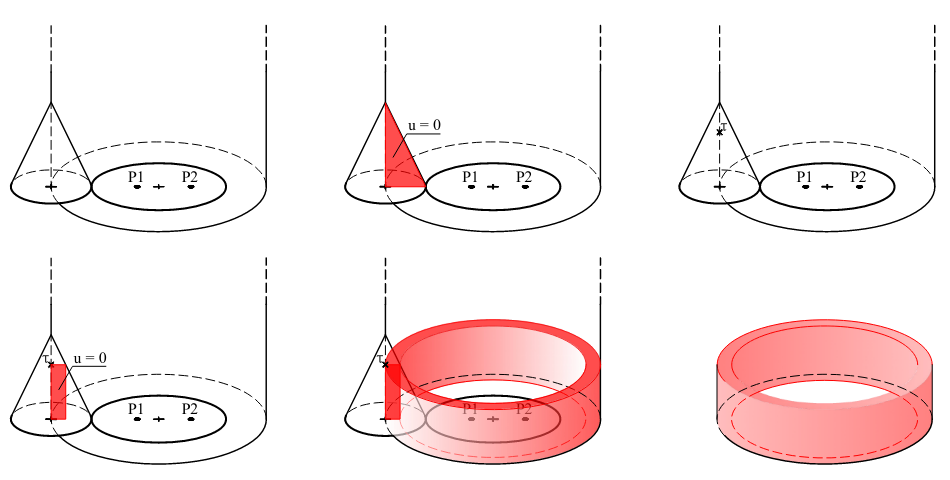}
		\caption{An illustration of Corollaries \ref{c:null-cone} and \ref{c:neigh}. Notice that $\textbf{u}=0$ in the red areas. This illustration shows how to construct $\Omega \setminus \overline{\Omega}_{\frac{d_0}{2} - \frac{\tau}{\alpha_0}}\times [0,\tau]$ (cf. the last picture) such that it is contained in $\widetilde{\mathcal{K}}(\tau)$.}
		\label{fig:corollaries}
	\end{figure}
	
	\section{The inverse problem}\label{sec:inverse}
		
	In this section, we will investigate the following inverse problem: 
	\textit{given early-time measurements of the changes of the gravitational field $\nabla S^+$ generated by the source, can we determine uniquely and in a stable way the moment tensor $M$ and the location $P$ of the source?} 
		
	We suppose	
		to have measured $\nabla S^+$ on $B_{r_0}(\bar{x})$ contained in $\mathbb{R}^3 \setminus \overline{\Omega}$, for times in the range $t \in [0,t_0]$. To prove uniqueness and stability for the early-warning inverse problem we need to assume that the Lam\'e parameters and the density are positive constants
		\[
		\lambda = \lambda_0, \quad \mu = \mu_0, \quad \rho = \rho_0  \quad \text{in } \overline{\Omega}.
		\]
In addition, to prove the Lipschitz stability estimate, we need to assume that all the admissible moment tensors $M$ satisfy the condition
\begin{equation}
m_0 \leq |M| \leq M_0
,
\label{eq:M-bound}
\end{equation}
given $m_0, M_0 >0$.

Before going further, it is noteworthy to point out that the early-warning inverse problem we propose in this paper can be solved for any $t_0>0$, hence the adjective ``early-warning''. This is an advantage with respect to conventional inverse seismic problems, which are usually based on elastic waves. In fact, since the elastic waves propagate at finite speed, one has to wait for them to reach the boundary of $\Omega$ (see, for example, Theorem 4.2 in \cite{de2023quantitative}). The changes of the gravitational field $\nabla S^+$, instead, are generated {instantaneously} by Poisson's equation, and thus can be used to solve the inverse problem without having to impose a minimum on the time needed to determine uniquely the source. In this sense, our uniqueness and stability results can be viewed as a first step towards a mathematical justification of the benefits of using gravity-based EEW systems. 
	
	\subsection{Statement of the main results}
	
	The main result of this paper is the Lipschitz stability of the inverse problem:
	
	\begin{theorem}[Lipschitz stability]\label{t:stability}
Let $B_{r_0}(\bar{x}) \subset \mathbb{R}^3 \setminus \overline{\Omega}$, $t_0>0$. Consider two sources, $\textup{\textbf{f}}^{(1)}$ and $\textup{\textbf{f}}^{(2)}$, such that \[\textup{\textbf{f}}^{(j)} = - M^{(j)} \nabla( q (|x-P^{(j)}|)), \quad j=1,2,\]
		where $M^{(1)}, M^{(2)} \in \mathbb{M}^3$ are nonzero, symmetric, with vanishing trace, satisfy \eqref{eq:M-bound},  $P^{(1)},P^{(2)} \in \Omega_{d_0}$, and $q \in C_0^2(\Omega)$ satisfies \eqref{def:q}.  
		
		Let $(\textup{\textbf{u}}^{(1)}, S^{(1)})$ and $(\textup{\textbf{u}}^{(2)}, S^{(2)})$
		be (weak) solutions to \eqref{eq:elastic-dp}-\eqref{eq:poisson-dp} associated to $\textup{\textbf{f}}^{(1)}$ and $\textup{\textbf{f}}^{(2)}$, respectively, when $\lambda=\lambda_0$, $\mu=\mu_0$ and $\rho=\rho_0$. If   
		\[\|\nabla S^{(1)} (\cdot, t)- \nabla S^{(2)}(\cdot, t)\|_{L^2(B_{r_0}(\bar{x}))} \leq \varepsilon \quad \text{in } [0,t_0],
		\]
		then we have 
		\[
		\left| P^{(1)}-P^{(2)} \right| + \left|M^{(1)} - M^{(2)}\right|    \leq C \varepsilon,  
		\]
	where $C$ is a positive constant depending on $M_0$, $m_0$, $d_0$,   $\lambda_0$, $\mu_0$, $\rho_0$ and $t_0$.
	\end{theorem}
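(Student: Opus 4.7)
Setting $\mathbf{u} := \mathbf{u}^{(1)} - \mathbf{u}^{(2)}$, $S := S^{(1)} - S^{(2)}$, $\mathbf{f} := \mathbf{f}^{(1)} - \mathbf{f}^{(2)}$, these solve the same coupled system \eqref{eq:elastic-dp}--\eqref{eq:poisson-dp} by linearity, and it suffices to bound $D := |M^{(1)} - M^{(2)}| + |P^{(1)} - P^{(2)}|$ by $C \varepsilon$. The plan is to exploit the early-time regime to collapse the problem to a finite-dimensional one for the leading Taylor coefficient of $\mathbf{u}(\cdot, t)$ at $t = 0$, and then to prove Lipschitz stability for the reduced problem by a compactness argument.

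First, I would use Corollary \ref{c:neigh}: for $t \le \tau^* := \min(t_0, \tau_0)$, $\mathbf{u}(\cdot, t)$ vanishes in a collar of $\partial\Omega$, so the transmission conditions for $S$ degenerate to plain $C^1$ matching across $\partial\Omega$ and $S(\cdot, t)$ is the Newtonian potential of $-\rho_0 \mathrm{div}\,\mathbf{u}(\cdot, t)$ on $\mathbb{R}^3$. Integration by parts gives, for $x \in \mathbb{R}^3 \setminus \overline\Omega$,
\[
S(x, t) = -\frac{\rho_0}{4\pi} \int_\Omega \mathbf{u}(y, t) \cdot \frac{x - y}{|x - y|^3}\, dy.
\]
Then I would Taylor-expand $\mathbf{u}(y, t) = \tfrac{t^2}{2\rho_0}\mathbf{f}(y) + \mathbf{v}(y, t)$, using the zero initial data and $\rho_0 \mathbf{u}_{tt}|_{t=0} = \mathbf{f}$; the corrector $\mathbf{v}$ solves the elastic Neumann problem with source $\tfrac{t^2}{2\rho_0}\mathrm{div}(\mathbb{C}\nabla\mathbf{f})$ and zero initial data, so a Duhamel formula combined with conservation of elastic energy (in the spirit of Proposition \ref{p:energy-1}) yields $\|\mathbf{v}(\cdot, t)\|_{L^2(\Omega)} \le C_1 t^4 \|\mathbf{f}\|_{H^2(\Omega)}$. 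Since the a priori bounds on $(M^{(j)}, P^{(j)})$ and the fixed profile $q$ give $\|\mathbf{f}\|_{H^2(\Omega)} \le C_2 D$, and $\mathrm{dist}(B_{r_0}(\bar x), \Omega) > 0$ keeps the kernel and its $x$-derivatives bounded, I obtain
\[
\nabla S(x, t) = -\frac{t^2}{8\pi} \nabla \Psi(x) + \mathbf{R}(x, t), \qquad \|\mathbf{R}(\cdot, t)\|_{L^2(B_{r_0}(\bar x))} \le C_3 t^4 D,
\]
where $\Psi(x) := \int_\Omega \mathbf{f}(y) \cdot \frac{x - y}{|x - y|^3}\, dy$ is harmonic in $\mathbb{R}^3 \setminus \overline\Omega$.

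The heart of the proof is the Lipschitz lower bound $\|\nabla \Psi\|_{L^2(B_{r_0}(\bar x))} \ge c_0 D$ on the admissible set. I would prove it by contradiction and compactness: a violating sequence of admissible parameters is bounded, hence has a subsequence along which $\|\nabla \Psi_n\|/D_n \to 0$. Either $D_n \to 0$, in which case I linearize at the limit point; the differential of $(M, P) \mapsto \Psi$ has trivial kernel because $\delta M$ produces a pure quadrupole harmonic on the exterior (using $\mathrm{tr}\,M = 0$) while $\delta P$ acts by $-\delta P \cdot \nabla \Psi$, which has a nontrivial octupole component, so these distinct angular-momentum sectors cannot cancel. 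Or $D$ persists in the limit, in which case the analyticity of $\Psi$ outside $\Omega$ (made quantitative via propagation of smallness for harmonic functions in the spirit of \cite{Alessandrini_2009}) extends $\Psi \equiv 0$ from $B_{r_0}(\bar x)$ to all of $\mathbb{R}^3 \setminus \overline\Omega$, and the multipole expansion at infinity forces $M^{(1)} = M^{(2)}$ and $P^{(1)} = P^{(2)}$, contradicting $D > 0$.

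Combining the lower bound with the previous display,
\[
\varepsilon \ge \|\nabla S(\cdot, t)\|_{L^2(B_{r_0}(\bar x))} \ge \frac{c_0 t^2}{8\pi} D - C_3 t^4 D \qquad \text{for all } t \in (0, \tau^*],
\]
and choosing $t = t^* := \min\{\tau^*, \sqrt{c_0/(16\pi C_3)}\}$ absorbs the error into half of the leading term, giving $D \le (16\pi/(c_0 (t^*)^2))\,\varepsilon$ with $t^*$ and $c_0$ depending only on the stated data. The hardest part will be the Lipschitz lower bound for $\Psi$, where I must promote injectivity of the finite-dimensional parameter-to-potential map to a uniform quantitative statement on the compact admissible set, and verify carefully that the linearization is injective across the full admissible range of $(M, P)$.
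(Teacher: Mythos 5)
Your approach is structurally different from the paper's, and if completed carefully it could work, but it neither saves the core algebraic work nor is it quite airtight as written. The paper pairs the elastic equation against harmonic polynomial test functions $\phi$, reduces the data to the scalar $z(t_1)=\rho_0\int_\Omega\textbf{u}\cdot\nabla\phi$ (expressible as a boundary integral of $S$), and then invokes Alessandrini's quantitative three--ball inequality to propagate the smallness of $\nabla S$ from $B_{r_0}(\bar x)$ to $\partial\Omega$; this yields a H\"older estimate $\varepsilon^\theta D^{1-\theta}$, which is absorbed into Lipschitz at the end. You instead Taylor-expand $\textbf{u}$ in $t$ inside the Newtonian-potential representation of $S$, isolate the leading harmonic $\Psi$, and lower-bound $\|\nabla\Psi\|_{L^2(B_{r_0})}$ by compactness. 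The trade-off: the paper's constants are in principle explicit while your $c_0$ is not; and both arguments ultimately hinge on the same finite-dimensional identification of $(M,P)$ from quadrupole and octupole coefficients of a harmonic function, which in your Case $D_n\to 0$ (injectivity of the differential) and Case $D_\infty>0$ (uniqueness from $\Psi\equiv 0$) is exactly the content of the paper's Steps 1 and 2 with $\phi$ quadratic and cubic. You are right to flag this as the hardest part; it is not a shortcut.

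Two concrete gaps. First, you write $\|\textbf{v}(\cdot,t)\|_{L^2}\lesssim t^4\|\textbf{f}\|_{H^2}$, but $q\in C^2_0(\Omega)$ only gives $\textbf{f}=-M\nabla(q(|x-P|))\in H^1(\Omega,\mathbb{R}^3)$, not $H^2$, so $\text{div}(\mathbb{C}\nabla\textbf{f})$ is merely $H^{-1}$ and the stated estimate is unavailable; one must instead run the wave-equation energy estimate at one lower level of regularity, obtaining $\|\textbf{v}(\cdot,t)\|_{L^2}\lesssim t^4\|\textbf{f}\|_{H^1}$, which is sufficient because you only pair $\textbf{v}$ against a kernel that is smooth on $B_{r_0}(\bar x)$, but this must be said. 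Second, in the compactness step with $D_n\to 0$, expanding $\Psi_n$ around the limit parameter $\theta_*$ gives a quadratic remainder of size $|\theta^{(j)}_n-\theta_*|^2$, which need not be $o(D_n)$ since $D_n$ only controls the gap between the two configurations, not their distance to the limit; the clean fix is to write $\Psi_n=\int_0^1 d\Psi\bigl(\theta^{(2)}_n+s(\theta^{(1)}_n-\theta^{(2)}_n)\bigr)[\theta^{(1)}_n-\theta^{(2)}_n]\,ds$ and use uniform continuity of $d\Psi$ over the compact admissible set. With these repairs the argument should close, with the injectivity verification done by the paper's explicit harmonic-polynomial computations or an equivalent multipole calculation.
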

	
	Uniqueness follows from Theorem \ref{t:stability} by letting $\varepsilon \to 0$. However, for the sake of the reader's understanding, in the sequel we first give the proof of uniqueness, since it clearly presents the role played by the elasto-gravitational coupling effect in solving the inverse problem. Also, focusing first on uniqueness first will make the proof of the desired Lipschitz stability a bit lighter, since we will reuse some of the calculations.
	
	Our uniqueness result can be summarized as follows:
	
	\begin{theorem}[uniqueness]\label{t:uniqueness} 	Let $B_{r_0}(\bar{x}) \subset \mathbb{R}^3 \setminus \overline{\Omega}$, $t_0>0$. Under the same hypothesis of Theorem \ref{t:stability} for $\textup{\textbf{f}}^{(1)}$, $\textup{\textbf{f}}^{(2)}$, $(\textup{\textbf{u}}^{(1)}, S^{(1)})$, $(\textup{\textbf{u}}^{(2)}, S^{(2)})$, $\lambda$,  $\mu$ and $\rho$, if   
		\[\nabla S^{(1)} = \nabla S^{(2)} \quad \text{in } B_{r_0}(\bar{x}) \times [0,t_0],
		\]
		then 
		\[M^{(1)} = M^{(2)} \; \text{ and } \; P^{(1)}=P^{(2)}. 
		\]
	\end{theorem}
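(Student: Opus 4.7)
Consider the differences $\textbf{u} := \textbf{u}^{(1)} - \textbf{u}^{(2)}$, $S := S^{(1)} - S^{(2)}$, and $\textbf{f} := \textbf{f}^{(1)} - \textbf{f}^{(2)}$. By linearity they solve the coupled system \eqref{eq:elastic-dp}--\eqref{eq:poisson-dp} with source $\textbf{f}$, which is still supported in $\Omega_{d_0/2}$. The hypothesis reads $\nabla S^+\equiv 0$ on $B_{r_0}(\bar{x})\times[0,t_0]$. Since $\Omega$ is bounded and convex, $\mathbb{R}^3\setminus\overline{\Omega}$ is connected, $S^+(\cdot,t)$ is harmonic there with $S^+\to 0$ at infinity, so unique continuation for harmonic functions promotes the vanishing on the ball to $S^+\equiv 0$ on the whole exterior, for every $t\in[0,t_0]$.

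Next I bring in the finite-speed-of-propagation information. By Corollary~\ref{c:neigh} applied to the difference, $\textbf{u}$ vanishes on $\partial\Omega\times[0,\tau_0)$, which makes the interfacial jump $(\textbf{u}\cdot\nu)\delta_{\partial\Omega}$ in the distributional identity $\Delta S = -\rho_0\,\text{div}(\textbf{u})\chi_\Omega + (\textbf{u}\cdot\nu)\delta_{\partial\Omega}$ disappear on that time interval. Together with the decay $S\to 0$ at infinity, this produces the Newtonian representation
\[
S(x,t) = \frac{\rho_0}{4\pi}\int_\Omega \frac{\text{div}_y \textbf{u}(y,t)}{|x-y|}\,\dd y.
\]
Using $\textbf{u}(\cdot,0)=\textbf{u}_t(\cdot,0)=0$ and $\rho_0\textbf{u}_{tt}(\cdot,0)=\textbf{f}$ from \eqref{eq:elastic-dp} (the regularity of Theorem~\ref{t:existence-uniqueness} justifies differentiating twice in $t$), and computing $\partial_t^2 S^+$ at $t=0$ (which vanishes because $S^+$ is zero throughout $[0,t_0]$), I obtain the elliptic identity
\[
\frac{1}{4\pi}\int_\Omega \frac{\text{div}_y \textbf{f}(y)}{|x-y|}\,\dd y = 0 \quad\text{for every } x\notin\overline{\Omega}.
\]

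It remains to convert this vanishing Newtonian potential into the identification of $(M,P)$. Two integrations by parts (no boundary terms, since $q$ is compactly supported) and the identity $\partial_{y_i}\partial_{y_j}|x-y|^{-1}=\partial_{x_i}\partial_{x_j}|x-y|^{-1}$ rewrite the Newtonian potential of $\text{div}(\textbf{f}^{(j)}) = -M^{(j)}:D^2 q(|x-P^{(j)}|)$ as $-M^{(j)}:D^2 U(x-P^{(j)})$, where $U$ is the Newtonian potential of the radial function $q(|\cdot|)$. By Newton's shell theorem and the normalization $\int q(|y|)\,\dd y=1$, $U(z) = (4\pi|z|)^{-1}$ whenever $|z|>d_0/2$; since $P^{(j)}\in\Omega_{d_0}$, this applies for every $x\notin\overline{\Omega}$, so the identity reduces to
\[
M^{(1)}:D^2\frac{1}{|x-P^{(1)}|} = M^{(2)}:D^2\frac{1}{|x-P^{(2)}|},\quad x\notin\overline{\Omega}.
\]
Both sides are harmonic in $\mathbb{R}^3\setminus\{P^{(1)},P^{(2)}\}$, so unique continuation extends the equality to that whole punctured space. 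A direct computation using trace-freeness of $M^{(j)}$ yields $M:D^2|x-P|^{-1} = 3\,(x-P)^T M(x-P)/|x-P|^5$, which exhibits a genuine $|x-P|^{-3}$ blow-up at $P$ whenever $M\ne 0$. If $P^{(1)}\ne P^{(2)}$, the left-hand side blows up at $P^{(1)}$ while the right-hand side is smooth there, a contradiction; hence $P^{(1)}=P^{(2)}=:P$. The residual identity $(M^{(1)}-M^{(2)})y\cdot y=0$ for all $y\ne 0$, together with symmetry of $M^{(1)}-M^{(2)}$, then forces $M^{(1)}=M^{(2)}$.

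The conceptual crux --- the step where the elasto-gravitational coupling is essential --- is the second paragraph: it is the instantaneous (elliptic) determination of $S^+$ outside $\Omega$, combined with the fact that the elastic disturbance has not yet reached $\partial\Omega$ at small times (so that the Dirac jump term is absent), that collapses the Poisson transmission problem to a clean Newtonian potential at $t=0$ and reduces the early-warning uniqueness question to the classical identification of a multipole singularity of a harmonic function.
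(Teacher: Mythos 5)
Your proof is correct, but it takes a genuinely different route from the paper's. Where you agree: both arguments begin by reducing to the differences $(\textbf{u},S,\textbf{f})$, use unique continuation for harmonic functions to upgrade $\nabla S^+=0$ on the ball to $S^+\equiv 0$ on the whole exterior, and both ultimately hinge on the vanishing of $\int_\Omega \textbf{f}\cdot\nabla\phi$ over a family of functions $\phi$ harmonic in $\Omega$. Where you diverge: the paper obtains this vanishing by pairing the elastic PDE with $\nabla\phi$ for an \emph{arbitrary} harmonic $\phi$, integrating by parts (using $\textbf{u}=0$ on $\partial\Omega$ from Corollary~\ref{c:neigh} to kill boundary terms), and expressing the resulting quantity $z=\rho_0\int_\Omega\textbf{u}\cdot\nabla\phi$ as a double-layer boundary integral of $S$, which vanishes; it then tests with explicit quadratic and cubic harmonic polynomials and solves the resulting linear system. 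You instead write the Newtonian-potential representation of $S$, differentiate twice in time at $t=0$ (using $\rho_0\textbf{u}_{tt}(\cdot,0)=\textbf{f}$), which yields the vanishing only for the particular harmonic family $\phi=\tfrac{1}{|x-\cdot|}$ with $x$ exterior; you then invoke Newton's shell theorem to collapse this to the multipole identity $M^{(1)}\!:\!D^2|x-P^{(1)}|^{-1}=M^{(2)}\!:\!D^2|x-P^{(2)}|^{-1}$ on the exterior, extend by analyticity, and finish with a one-line singularity argument plus the elementary fact that a symmetric matrix is determined by its quadratic form. Your route is shorter and more conceptual — the source identification becomes the classical problem of reading off the leading multipole of a compactly supported density from its exterior field, and in fact you do not even strictly need the boundary vanishing of $\textbf{u}$ if you keep the representation in the form $S(x,t)=-\tfrac{\rho_0}{4\pi}\int_\Omega\textbf{u}(y,t)\cdot\nabla_y|x-y|^{-1}\,\dd y$. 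The paper's longer polynomial-testing route has a different payoff: the same scalar quantities $\int_\Omega\textbf{f}\cdot\nabla\phi$ for specific polynomial $\phi$ reappear verbatim in the Lipschitz stability proof, where the explicit linear systems and the a priori bounds \eqref{eq:M-bound} turn qualitative uniqueness into a quantitative estimate; your singularity argument, while slick for uniqueness, does not quantize as directly. One small caveat in your write-up: for the Newtonian representation with the $\mathrm{div}\,\textbf{u}$ in the numerator to have no boundary contribution, you correctly appeal to Corollary~\ref{c:neigh}, which requires $t$ in $[0,\tau_0)$; you should state, as the paper does, that one may take $t_0<\tau_0$ without loss of generality.
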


	\subsection{Proof of Theorem \ref{t:uniqueness}}
	
	\begin{proof}[\unskip\nopunct]
		Define
		\[\textbf{u}:=\textbf{u}^{(2)}-\textbf{u}^{(1)}, \quad S:=S^{(2)}-S^{(1)}, \quad \textbf{f}:= \textbf{f}^{(2)}-\textbf{f}^{(1)}.\] 
		Trivially, ($\textbf{u}$, $S$) solves (in a weak sense) the following system of elasto-gravitational equations: 
		\begin{equation} 
			\begin{cases}
				\rho_0 \textbf{u}_{tt} - \text{div} (\mathbb{C}\nabla \textbf{u})  = \textbf{f} & (x,t) \in \Omega \times [0,\infty),\\
				\Delta S^- = -\rho_0\text{div}(\textbf{u}) & (x,t) \in \Omega \times [0,\infty),\\
				\Delta S^+ = 0 & (x,t) \in (\mathbb{R}^3 \setminus \overline{\Omega})\times [0,\infty), \\
				(\mathbb{C}\nabla \textbf{u}) \cdot \nu = 0 & (x,t) \in \partial \Omega\times [0,\infty), \\
				S^{-} = S^{+} & (x,t) \in \partial \Omega\times [0,\infty), \\
				(\nabla S^{-} + \textbf{u}) \cdot \nu  = \nabla S^{+} \cdot \nu & (x,t) \in \partial \Omega\times [0,\infty), \\
				\textbf{u}(x,0) =  \textbf{u}_t(x,0) = 0 & x \in \Omega,\\
				S^+ \to 0 & |x|\to \infty.
			\end{cases}
			\label{eq:elasto-gra-uni}
		\end{equation} 
		Also, since $\textup{\textbf{f}} \in H^1(\Omega, \mathbb{R}^3)$, the regularity results given in Section \ref{sec:forward} hold true. In particular, the estimate of Proposition \ref{p:energy} and its subsequent corollaries can be applied to \textbf{u}. 
		
		Let $\tau_0$ be the same as in Corollary \ref{c:null-cone}. In what follows, we assume that $t_0 <\tau_0$. We can do this, since our intention is to prove the uniqueness result for small times. 
		
		We begin by recalling that, from Corollary \ref{c:neigh}, $\textbf{u}=0$ in $\Omega \setminus \overline{\Omega}_{\frac{d_0}{2} - \frac{t_0}{\alpha_0}} \times [0,t_0]$ (cf. Figure \ref{fig:corollaries}). We have that
		\begin{equation*}
			\begin{cases}
				\Delta S^- = - \rho_0\text{div}(\textbf{u}) & (x,t) \in {\Omega}_{\frac{d_0}{2} - \frac{t_0}{\alpha_0}} \times [0,t_0],\\
				\Delta S^- = 0 & (x,t) \in \Omega \setminus \overline{\Omega}_{\frac{d_0}{2} - \frac{t_0}{\alpha_0}} \times [0,t_0],\\
				\Delta S^+ = 0 & (x,t) \in (\mathbb{R}^3 \setminus \overline{\Omega}) \times [0,t_0], \\
				S^-=S^+ & (x,t) \in \partial \Omega \times [0,t_0], \\ 
				\nabla S^- \cdot \nu = \nabla S^+ \cdot \nu  & (x,t) \in \partial \Omega \times [0,t_0],\\
				S^+ \to 0 & |x|\to \infty.
			\end{cases}
		\end{equation*}
		The continuity of the transmission conditions on $\partial \Omega$ assures that $S$ defined as
		\[S(x,t) = 
		\begin{cases} 
			S^-(x,t) & (x,t) \in \Omega \times [0,t_0],\\
			S^+(x,t) & (x,t) \in \mathbb{R}^3 \setminus \overline{\Omega} \times [0,t_0],
		\end{cases}  
		\]
		belongs to $H^2_\text{loc}\left(\mathbb{R}^3\setminus \overline{\Omega}_{\frac{d_0}{2} - \frac{t_0}{\alpha_0}}\right)$ for every $t \in [0,t_0]$. Now we apply the unique continuation property. Since $\nabla S=0$ in $B_{r_0}(\bar{x}) \times [0,t_0]$ and $S\to 0$ as $|x|\to\infty$, we have that
		\begin{equation}
			S(x,t)=0 \quad \text{in } \mathbb{R}^3 \setminus \overline{\Omega}_{\frac{d_0}{2} - \frac{t_0}{\alpha_0}} \quad \text{for every }t\in [0,t_0].
			\label{eq:ucp}
		\end{equation}
		
		Now the hypothesis that $\lambda=\lambda_0$, $\mu=\mu_0$ and $\rho=\rho_0$ are constants comes into play. Denote by $\phi$ a smooth function such that $\Delta \phi = 0$ in $\Omega$. Recall that $\textbf{u}$ solves
		\[
		\begin{cases}
			\rho_0 \textbf{u}_{tt} - \text{div}(\mathbb{C}\nabla \textbf{u}) = \textbf{f} & (x,t) \in \Omega \times [0,t_0],\\
			(\mathbb{C}\nabla \textbf{u}) \cdot \nu = 0 & (x,t) \in \partial \Omega \times [0,t_0],\\
			\textbf{u}(x,0)=\textbf{u}_t(x,0) =0 & x \in \Omega.
		\end{cases}
		\label{eq:elastic-tau}
		\]
		Multiplying the first equation of the system above by $\nabla \phi$ and integrating over $\Omega$ yield
		\begin{equation}
			\rho_0 \int_{\Omega} \textbf{u}_{tt} \cdot \nabla \phi + \int_{ \Omega} \partial_j (C_{ijkl} \partial_k u_l) \partial_i \phi = \int_{\Omega} \textbf{f} \cdot \nabla \phi.
			\label{eq:test1}
		\end{equation}
		We notice that
		\[
		\int_{ \Omega} \partial_j (C_{ijkl} \partial_k u_l) \partial_i \phi = \int_{\Omega} [\partial_j (C_{ijkl} \partial_k u_l \partial_i \phi) - C_{ijkl} \partial_k u_l \partial_{ji}^2 \phi] = - \int_{\Omega} C_{ijkl} \partial_k u_l \partial_{ji}^2 \phi, 
		\]
		where the last equality follows from the Neumann boundary condition for $\textbf{u}$. Since $\Delta \phi=0$, we have
		\[
		\begin{aligned} 
			\int_{\Omega} C_{ijkl} \partial_k u_l \partial_{ji}^2 \phi & = \lambda_0 \int_{\Omega} \Delta \phi \, \text{div}\textbf{u} + 2 \mu_0 \int_{ \Omega} \partial_i u_j \partial_{ij}^2 \phi \\ & = 2 \mu_0 \int_{\Omega} \partial_i (u_j \partial^2_{ij} \phi ) - 2\mu_0 \int_{ \Omega} u_j \partial_j \Delta \phi \\
			& = 2 \mu_0 \int_{\partial \Omega} u_j \partial_{ij}^2 \phi \nu_i = 0,  
		\end{aligned} 
		\]
		where the last equality follows from the fact that $\textbf{u}=0$ on $\partial \Omega \times [0,t_0]$. Thus equation \eqref{eq:test1} becomes
		\begin{equation}
			\rho_0 \int_{\Omega} \textbf{u}_{tt} \cdot \nabla \phi = \int_{\Omega} \textbf{f} \cdot \nabla \phi.
			\label{eq:test2}
		\end{equation}
		Define 
		\begin{equation}
			z := \rho_0 \int_{\Omega} \textbf{u} \cdot \nabla \phi. 
			\label{def:z}
		\end{equation}
		From \cite[Section 5.9.2]{evans1998partial} we have
		\begin{equation}
			z_{tt} = \rho_0  \int_{\Omega} \textbf{u}_{tt} \cdot \nabla \phi.
			\label{def:z-tt}
		\end{equation}
		To prove the uniqueness of both the moment tensor and location of the source, we first need to verify that $z(t)=0$ in $[0,t_0]$. Since $\textbf{u}=0$ on $\partial \Omega \times [0,t_0]$, integrating by parts yields:
		\begin{equation}
			\begin{aligned}
				z &= \rho_0 \int_{\Omega} \partial_j (u_j \phi) - \text{div} \textbf{u} \, \phi \\ 
				& = \rho_0 \int_{\partial \Omega} u_j \nu_j \phi + \rho_0 \int_{\Omega} \Delta S \phi \\
				& = \rho_0 \int_{\partial \Omega} \left[\frac{\partial S}{\partial \nu} \phi - S \frac{\partial \phi}{\partial \nu}\right] + \rho_0 \int_{\Omega} S \Delta \phi\\
				&= \rho_0 \int_{\partial \Omega} \left[\frac{\partial S}{\partial \nu} \phi - S \frac{\partial \phi}{\partial \nu}\right].
			\end{aligned}
			\label{eq:z-s} 
		\end{equation}
		Then the fact that $z=0$ follows from \eqref{eq:ucp}.
		Thus equation \eqref{eq:test2} gives
		\[
		\int_{ \Omega} \textbf{f} \cdot \nabla \phi = 0,
		\]
		for every $\phi$ harmonic function in $\Omega$. 
		
		Since $\textbf{f}=\textbf{f}^{(2)}-\textbf{f}^{(1)}$, the equality above yields:
		\begin{equation}
			\int_{B_{\frac{d_0}{2}}(P^{(1)})} \frac{q' (|x-P^{(1)}|)}{|x-P^{(1)}|} M^{(1)}(x-P^{(1)}) \cdot \nabla \phi = \int_{B_{\frac{d_0}{2}}(P^{(2)})} \frac{q' (|x-P^{(2)}|)}{|x-P^{(2)}|} M^{(2)}(x-P^{(2)}) \cdot \nabla \phi.
			\label{eq:test3}
		\end{equation}
		To prove that $M^{(1)} = M^{(2)}$ and $P^{(1)}=P^{(2)}$, we have to make a specific choice for $\phi$. 
		
		Before going further, we shall need the following technical result: 
		\begin{lemma}\label{lem:q}
			We have 
			\begin{equation}
				\int_{B_{\frac{d_0}{2}}} \frac{q' (|x|)}{|x|} x_j^2 = -1,
				\label{eq:q-even}
			\end{equation}
			and
			\begin{equation}
				\int_{B_{\frac{d_0}{2}}} \frac{q' (|x|)}{|x|} x_j = \int_{B_{\frac{d_0}{2}}} \frac{q' (|x|)}{|x|} x_j x_k x_\ell = 0.
				\label{eq:q-odd}
			\end{equation}
		\end{lemma}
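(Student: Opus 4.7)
The plan is to prove the first identity via an integration-by-parts trick, and to prove the two vanishing identities purely by a parity/symmetry argument, since the radial weight $q'(|x|)/|x|$ is invariant under each reflection $x_i\mapsto -x_i$.

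For the first identity, I would start from the observation that
\[
\partial_j\bigl(q(|x|)\bigr) = q'(|x|)\,\frac{x_j}{|x|},
\]
so that
\[
\int_{B_{d_0/2}} \frac{q'(|x|)}{|x|}\,x_j^2\,\dd x \;=\; \int_{B_{d_0/2}} \partial_j\bigl(q(|x|)\bigr)\,x_j\,\dd x.
\]
Integrate by parts in the variable $x_j$. The boundary term on $\partial B_{d_0/2}$ vanishes because $q(|x|)=0$ for $|x|\ge d_0/2$ by the support assumption \eqref{def:q}, and the interior term is $-\int_{B_{d_0/2}} q(|x|)\,\dd x$. Extending the domain of integration to $\mathbb{R}^3$ (still legitimate by the support condition) and using the normalization $\int_{\mathbb{R}^3} q(|x|)\,\dd x=1$ from \eqref{def:q} then gives $-1$, which is the claim.

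For the second identity, the key point is that $\frac{q'(|x|)}{|x|}$ is a radial function of $x$ and therefore invariant under every coordinate reflection $\sigma_i\colon x_i\mapsto -x_i$. Consider $\int_{B_{d_0/2}} \frac{q'(|x|)}{|x|}\,x_j\,\dd x$: change variables by $\sigma_j$. The ball $B_{d_0/2}$ and the weight are fixed, but $x_j$ changes sign, so the integral equals its own negative and must vanish. For the cubic expression $x_j x_k x_\ell$, pick any index that appears an odd number of times among $\{j,k,\ell\}$ (there always exists one, since three is odd, so at least one coordinate appears once or three times). The same reflection argument in that variable forces the integral to vanish.

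I do not anticipate a real obstacle here: the first identity is a one-line integration by parts using the radial chain rule, and the other two are textbook parity arguments. The only thing to be careful about is justifying that no boundary contribution appears in the integration by parts, which is immediate from $q$ being compactly supported in $(-d_0/2,d_0/2)$.
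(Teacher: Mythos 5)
Your proof is correct, and for \eqref{eq:q-even} it takes a slightly different route from the paper's. You write $\frac{q'(|x|)}{|x|}\,x_j^2 = \partial_j\bigl(q(|x|)\bigr)\,x_j$ and do a one-line integration by parts in the single variable $x_j$; the boundary term on $\partial B_{d_0/2}$ vanishes since $q$ is continuous and supported in $[-d_0/2,d_0/2]$, so $q(d_0/2)=0$, and the interior term is $-\int q(|x|)=-1$. The paper instead observes by rotational symmetry that $L_1=L_2=L_3=:L$, sums to get $3L=\int q'(|x|)\,|x|$, passes to spherical coordinates, and integrates by parts radially to get $3L=-3$. Both arguments are sound; yours is more direct in that it pins down the value for a fixed $j$ without appealing to the equality of the three $L_j$'s, while the paper's trades that for a cleaner purely radial computation. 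For \eqref{eq:q-odd}, your reflection/parity argument (always some index of $\{j,k,\ell\}$ appears an odd number of times, so reflecting that coordinate flips the sign while fixing the radial weight and the ball) is exactly the paper's argument, just spelled out in more detail.
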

		
		\begin{proof}
			We first prove \eqref{eq:q-even}. Define
			\[
			L_k := \int_{B_{\frac{d_0}{2}}} \frac{q' (|x|)}{|x|} x_j^2 .
			\]
			By symmetry,
			\[L:=L_1=L_2=L_3.
			\]
			We have
			\begin{equation*} 
				\begin{aligned}
					3 L = L_1+L_2+L_3 & = \int_{B_{\frac{d_0}{2}}} \frac{q' (|x|)}{|x|} (x_1^2+x_2^2+x_3^2) = \int_{B_{\frac{d_0}{2}}} q' (|x|) |x|\\
					& = 4 \pi \int_0^{\frac{d_0}{2}} q'(s) s^3   = -12 \pi \int_0^{\frac{d_0}{2}} q(s) s^2\\
					& = -3 \int_{B_{\frac{d_0}{2}}} q (|x|) \\
					& = -3,
				\end{aligned}
			\end{equation*}
			which implies $L=-1$.
			
			To prove \eqref{eq:q-odd}, it suffices to notice that we integrate an odd function over a spherically symmetric domain around the origin, hence the result is zero.
			
		\end{proof}
		
		We are now in the position of proving our uniqueness result. 
		
		\vspace{3mm}
		
		\textbf{Step 1: Moment tensor.} We first consider $\phi(x)=x_1x_2$. Trivially, such a function is harmonic in $\Omega$. Also, after a change of variables, we get 
		\begin{equation*}\begin{aligned} 
				\int_{B_{\frac{d_0}{2}}(P^{(1)})}  \frac{q'(|x-P^{(1)}|)}{|x-P^{(1)}|}  M^{(1)}_{kl}(x_l-(P^{(1)})_l) \partial_k \phi =  \int_{B_{\frac{d_0}{2}}} & \frac{q' (|y|)}{|y|}  \left(M_{1l}^{(1)} y_l y_2 + M_{2l}^{(1)} y_l y_1 \right.\\ 
				& \left. + \, M^{(1)}_{1l}y_l (P^{(1)})_2 +  M^{(1)}_{2l}y_l (P^{(1)})_1 \right). 
			\end{aligned}
		\end{equation*}
	Since $M^{(1)}$ is symmetric, Lemma \ref{lem:q} gives
	\begin{equation*}\begin{aligned} 
			\int_{B_{\frac{d_0}{2}}(P^{(1)})} \frac{q' (|x-P^{(1)}|)}{|x-P^{(1)}|} M^{(1)}_{kl}(x_l-(P^{(1)})_l) \partial_k \phi =  M_{12}^{(1)}\int_{B_{\frac{d_0}{2}}} & \frac{q' (|y|)}{|y|} \left(y_1^2 + y_2^2 \right) = -2 M_{12}^{(1)} . 
		\end{aligned}
	\end{equation*}
	Repeating the same calculations for $M^{(2)}$ yields
	\[
	\int_{B_{\frac{d_0}{2}}(P^{(2)})} \frac{q' (|x-P^{(2)}|)}{|x-P^{(2)}|} M^{(2)}(x-P^{(2)}) \cdot \nabla \phi = -2 M_{12}^{(2)}. 
	\]
	From \eqref{eq:test3}, we have 
	\[
	M_{12}^{(1)} = M_{12}^{(2)}.
	\]
	By taking $\phi = x_ix_j$, with $i\neq j$, we finally get
	\[
	M_{ij}^{(1)} = M_{ij}^{(2)}, \quad i, j=1,2,3.
	\]
	We now repeat the calculations above for $\phi = (x_1^2-x_2^2)/2$, and exploit the fact that both $M^{(1)}$ and $M^{(2)}$ have vanishing traces. After a change of variables, we get 
	\begin{equation*}\begin{aligned} 
			\int_{B_{\frac{d_0}{2}}(P^{(1)})}  \frac{q'(|x-P^{(1)}|)}{|x-P^{(1)}|}  M^{(1)}_{kl}(x_l-(P^{(1)})_l) \partial_k \phi =  \int_{B_{\frac{d_0}{2}}} & \frac{q' (|y|)}{|y|}  \left(M_{1l}^{(1)} y_l y_1 - M_{2l}^{(1)} y_l y_2 \right.\\ 
			& \left. + \, M^{(1)}_{1l}y_l (P^{(1)})_2 -  M^{(1)}_{2l}y_l (P^{(1)})_1 \right). 
		\end{aligned}
	\end{equation*}
	Thus
	\begin{equation*}\begin{aligned} 
			\int_{B_{\frac{d_0}{2}}(P^{(1)})}  \frac{q' (|x-P^{(1)}|)}{|x-P^{(1)}|} M^{(1)}_{kl}(x_l-(P^{(1)})_l) \partial_k \phi & =  \int_{B_{\frac{d_0}{2}}} \frac{q' (|y|)}{|y|} \left(M_{11}^{(1)} y_1^2 - M_{22}^{(1)} y_2^2 \right)\\ & = M_{22}^{(1)} - M_{11}^{(1)}.
		\end{aligned}
	\end{equation*}
	Repeating the same calculations for $M^{(2)}$ yields
	\[
	\int_{B_{\frac{d_0}{2}}(P^{(2)})} \frac{q' (|x-P^{(2)}|)}{|x-P^{(2)}|} M^{(2)}(x-P^{(2)}) \cdot \nabla \phi = M_{22}^{(2)} - M_{11}^{(2)}. 
	\]
	From \eqref{eq:test3}, we have 
	\[
	M_{11}^{(1)} - M_{11}^{(2)} =M_{22}^{(1)} - M_{22}^{(2)}.
	\]
	By taking $\phi = (x_1^2-x_3^2)/2$ and  $\phi=(x_2^2-x_3^2)/2$, we get also
	\[
	M_{11}^{(1)} - M_{11}^{(2)} =M_{33}^{(1)} - M_{33}^{(2)},
	\]
	\[
	M_{22}^{(1)} - M_{22}^{(2)} =M_{33}^{(1)} - M_{33}^{(2)}.
	\]
	Using the results above, together with tr$M^{(1)}=\text{tr}M^{(2)}=0$:
	\[
	\left(M_{11}^{(1)} - M_{11}^{(2)}\right) + \left(M_{22}^{(1)} - M_{22}^{(2)}\right) + \left(M_{33}^{(1)} - M_{33}^{(2)}\right)=0,
	\]
	we find
	\[
	M_{11}^{(1)} = M_{11}^{(2)}, \quad M_{22}^{(1)} = M_{22}^{(2)}, \quad M_{33}^{(1)} = M_{33}^{(2)}.
	\] 
	Putting everything together, we finally get $M^{(1)}=M^{(2)}$. 
	\vspace{3mm}
	
	\textbf{Step 2: Location of the source.} Denote now by $M$ the moment tensor. We begin by noticing that \eqref{eq:test3} can be rewritten as follows:
	\begin{equation}
		\int_{B_{\frac{d_0}{2}}(P^{(1)})} \frac{q' (|x-P^{(1)}|)}{|x-P^{(1)}|} M(x-P^{(1)}) \cdot \nabla \phi = \int_{B_{\frac{d_0}{2}}(P^{(2)})} \frac{q' (|x-P^{(2)}|)}{|x-P^{(2)}|} M(x-P^{(2)}) \cdot \nabla \phi.
		\label{eq:test4}
	\end{equation}
	We now consider $\phi = x_1^3-3x_2^2x_1$. Again, such a function is harmonic in $\Omega$. Also, after a change of variable, we get
	\begin{equation*}\begin{aligned} 
			 \int_{B_{\frac{d_0}{2}}(P^{(1)})} & \frac{q'(|x-P^{(1)}|)}{|x-P^{(1)}|}  M_{kl}(x_l-(P^{(1)})_l) \partial_k \phi  \\ &=  \int_{B_{\frac{d_0}{2}}} \frac{q' (|y|)}{|y|}  \left(M_{11} y_1 (3(y_1+(P^{(1)})_1)^2-3(y_2+(P^{(1)})_2)^2) +M_{12} y_2 (3(y_1+(P^{(1)})_1)^2  \right.\\ &  \quad \quad \quad \quad \left. - 3(y_2+(P^{(1)})_2)^2)  +M_{13} y_3 (3(y_1+(P^{(1)})_1)^2 - 3(y_2+(P^{(1)})_2)^2)  \right.\\ &  \quad \quad \quad \quad \left.+ \, M_{21}y_1 (-6(y_2+(P^{(1)})_2) (y_1+(P^{(1)})_1)) + \, M_{22}y_2 (-6(y_2+(P^{(1)})_2) (y_1+(P^{(1)})_1)) \right.\\ &  \quad \quad \quad \quad \left.+ \, M_{23}y_3 (-6(y_2+(P^{(1)})_2) (y_1+(P^{(1)})_1))\right). 
		\end{aligned}
	\end{equation*}
	Since $M_{12}=M_{21}$ and $M_{11}=-M_{22}-M_{22}$, Lemma \eqref{lem:q} implies that 
	\begin{equation*}\begin{aligned} 
			\int_{B_{\frac{d_0}{2}}(P^{(1)})}  & \frac{q'(|x-P^{(1)}|)}{|x-P^{(1)}|}  M_{kl}(x_l-(P^{(1)})_l) \partial_k \phi \\ &=  \int_{B_{\frac{d_0}{2}}} \frac{q' (|y|)}{|y|}  \left(6 M_{11} y_1^2 (P^{(1)})_1 - 6 M_{12} y_2^2 (P^{(1)})_2 - 6 M_{21} y_1^2 (P^{(1)})_2 - 6 M_{22} y_2^2(P^{(1)})_1)\right) \\
			& \quad \quad \quad \; = 6 (2M_{22}+M_{33}) (P^{(1)})_1 + 12 M_{12} (P^{(1)})_2. 
		\end{aligned}
	\end{equation*}
Repeating the same calculations for $P^{(2)}$ yields 
\begin{equation*}\begin{aligned} 
		\int_{B_{\frac{d_0}{2}}(P^{(2)})}  \frac{q'(|x-P^{(2)}|)}{|x-P^{(2)}|}  M_{kl}(x_l-(P^{(2)})_l) \partial_k \phi = 6 (2M_{22}+M_{33}) (P^{(2)})_1 + 12 M_{12} (P^{(2)})_2. 
	\end{aligned}
\end{equation*}
From \eqref{eq:test4}, we have 
\begin{equation}
	(2M_{22}+M_{33})((P^{(2)})_1 - (P^{(1)})_1) + 2M_{12}((P^{(2)})_2 - (P^{(1)})_2) = 0.
	\label{eq:p1}
\end{equation}
We now consider $\phi = x_2^3-3x_1^2x_2$. For such a choice of $\phi$, we obtain 
\begin{equation}
	(2M_{22}+M_{33})((P^{(2)})_2 - (P^{(1)})_2) + 2M_{12}((P^{(1)})_1 - (P^{(2)})_1) = 0.
	\label{eq:p2}
\end{equation}
We multiply \eqref{eq:p1} by $2M_{22}+M_{33}$: 
\[
(2M_{22}+M_{33})^2 ((P^{(1)})_1 - (P^{(2)})_1) + 2M_{12} (2M_{22} +M_{33}) ((P^{(1)})_2 - (P^{(2)})_2) =0
\]
and \eqref{eq:p2} by
$2M_{12}$:
\[
2M_{12}(2M_{22}+M_{33}) ((P^{(2)})_2 - (P^{(1)})_2) + 4M^2_{12} ((P^{(1)})_1 - (P^{(2)})_1) =0
\]
After summing both equations, we get
\begin{equation}
	((2M_{22} + M_{33})^2 + 4M_{12}^2)((P^{(1)})_1 - (P^{(2)})_1) = 0.
	\label{eq:cond1}
\end{equation}
We now consider $\phi = x_1^3-3x_3^2x_1$ and  $\phi = x_3^2-3x_3x_1^2$. We follow the same procedure  as the one described above. We get
\begin{equation}
	((M_{22} + 2M_{33})^2 + 4M_{31}^2)((P^{(1)})_1 - (P^{(2)})_1) = 0.
	\label{eq:cond2}
\end{equation}
Finally, we consider $\phi = x_1x_2x_3$. We get 
\begin{equation}
	M_{32} ((P^{(1)})_1 - (P^{(2)})_1)= 0.
	\label{eq:cond3}
\end{equation}
Assume now that $(P^{(1)})_1 \neq (P^{(2)})_1$. Then
\[
M_{32} = M_{12} = M_{31} =  M_{22}=M_{33} =0.
\]
Since $M$ is symmetric and tr$M=0$, the assumption $(P^{(1)})_1 \neq (P^{(2)})_1$ gives $M=0$, which is absurd.

Finally, to prove that $(P^{(1)})_2= (P^{(2)})_2$ and $(P^{(1)})_3= (P^{(2)})_3$, it will suffice to consider also the following harmonic functions:
\[
\phi = x_2^3 - 3x_2 x_3^2, \quad \text{and }\phi= x_3^3-3x_3x_2^2.  
\]
\end{proof}

\subsection{Proof of Theorem \ref{t:stability}}

\begin{proof}[\unskip\nopunct]
Define $\textbf{u}$, $S$, and $\textbf{f}$
as at the beginning of the uniqueness proof of Theorem \ref{t:uniqueness}.
To prove the desired Lipschitz stability estimate, we first need to propagate the smallness of the data 
\[\|\nabla S(\cdot, t)\|_{L^2(B_{r_0}(\bar{x}))} \leq \varepsilon \quad \text{in } [0,t_0],
\] 
into the integral 
\[
\left|\int_{\Omega} \textbf{f} \cdot \nabla \phi\right|,
\] 
since $\textbf{f}$, by definition, encodes the information on both $|M^{(2)}-M^{(1)}|$ and $|P^{(2)}-P^{(1)}|$. 

Let $\tau_0$ be the same as in Corollary \ref{c:null-cone}. As in the uniqueness proof of Theorem \ref{t:uniqueness}, we consider, without loss of generality, $t_0 <\tau_0$. Moreover, we recall that
\[
z_{tt} = \int_{\Omega} \textbf{f} \cdot \nabla \phi,
\]
where $z$ is defined as 
\[
z = \rho_0 \int_{\Omega} \textbf{u} \cdot \nabla \phi.
\]
Throughout the proof, we shall fix $t_1 \leq \frac{t_0}{2}$. Since
\[
z(0) = \rho_0 \int_{\Omega} \textbf{u}(x,0) \cdot \nabla \phi = 0, \quad 
z_t(0) = \rho_0 \int_{\Omega} \textbf{u}_t(x,0) \cdot \nabla \phi = 0,
\]
we have
\begin{equation*}
	\begin{aligned} z(t_1) & = \rho_0 \int_0^{t_1} z_t(s)\, \text{d}s  = \rho_0 \int_0^{t_1} \int_0^{s} z_{tt}(\eta) \, \text{d}\eta \, \text{d}s \\ & = \rho_0 \int_0^{t_1}\left( \int_\eta^{t_1} \text{d}s\right)z_{tt}(\eta) \, \text{d}\eta = \rho_0 \int_0^{t_1}\left( t_1-\eta\right)z_{tt}(\eta) \, \text{d}\eta \\ &
		= \rho_0\frac{ t_1^2}{2} \int_{\Omega} \textbf{f} \cdot \nabla \phi, 
	\end{aligned} 
\end{equation*}
hence
\begin{equation}
	\left| \int_{\Omega} \textbf{f} \cdot \nabla \phi \right| \leq \frac{2}{t_1^2} \frac{|z(t_1)|}{\rho_0}. 
	\label{eq:f-omega1-omega2}
\end{equation}
It is now apparent that, in the first part of the proof, our main efforts shall be devoted to proving that the smallness of the data propagates into $|z(t_1)|$. 

From \eqref{eq:z-s}, we write
\begin{equation*}
	\begin{aligned} |z(t_1)| & = \rho_0 \left|\int_{\partial \Omega} \left[\frac{\partial S(\cdot, t_1)}{\partial \nu}\phi - S(\cdot, t_1) \frac{\partial \phi}{\partial \nu}\right] \right| \\& \leq \rho_0 \int_{\partial \Omega} \left|\frac{\partial S(\cdot, t_1)}{\partial \nu}\right| |\phi| + \rho_0 \left| \int_{\partial \Omega}S(\cdot, t_1)\frac{\partial \phi }{\partial \nu} \, \right|. 
	\end{aligned} 
\end{equation*} 
We first notice that
\begin{equation*}
	\int_{\partial \Omega} \left|\frac{\partial S(\cdot, t_1)}{\partial \nu}\right| |\phi| \leq \underbrace{ \|\nabla S(\cdot, t_1)\|_{L^\infty (\partial \Omega)}}_{\omega_1} \int_{\partial \Omega} |\phi|. 
\end{equation*}
Secondly, for $x_0 \in B_{R_0} \setminus \overline{\Omega}$, where $B_{R_0}$ is such that
\[ 
\overline{B_{r_0}(\bar{x}) \cup \Omega}  \subset B_{R_0},
\]
we have
\begin{equation*}
	\begin{aligned} 
		\left|\int_{\partial \Omega} S(\cdot,t_1)\frac{\partial \phi(x)}{\partial \nu}  \right| & \leq \left| \int_{\partial \Omega} (S(\cdot, t_1) - S(x_0,t_1)) \frac{\partial \phi}{\partial \nu} \right| \\& \leq \underbrace{ \| S(\cdot, t_1) - S(x_0,t_1)\|_{L^\infty (\partial \Omega)}}_{\omega_2} \int_{\partial \Omega} \left|\frac{\partial \phi}{\partial \nu}\right|. 
	\end{aligned} 
\end{equation*}
Putting everything together, we obtain
\begin{equation*}
	|z(t_1)| \leq \rho_0 \left(\omega_1 \int_{\partial \Omega} \left|\phi\right| + \omega_2 \int_{\partial \Omega} \left|\frac{\partial \phi}{\partial \nu}\right|\right) \leq \rho_0 |\partial \Omega| \left(\omega_1 + \omega_2 \right) \|\phi \|_{C^1(\partial \Omega)}.
\end{equation*}
We thus begin to quantify the smallness of $|z(t_1)|$ by estimating 
\[
\omega_1 := \|\nabla S(\cdot, t_1)\|_{L^\infty(\partial \Omega)}.
\] 
We shall define $d_1$ such that $S(\cdot, t_1)$ is harmonic in $B_{\frac{d_1}{4}}(x)$, for $x \in \partial \Omega$. By Corollary \ref{c:neigh}, we set 
\[
d_1= \frac{d_0}{2} - \frac{t_1}{\alpha_0}.
\] 
For any $x \in \partial \Omega$, we have 
\begin{equation}
	\begin{aligned} 
		|\nabla S(x,t_1)|& \leq  \frac{1}{{\left|B_{\frac{d_1}{4}}(x)\right|}} \int_{B_{\frac{d_1}{4}}(x)} |\nabla S (\cdot, t_1)| \\ & \leq \frac{c}{d_1^{\frac{3}{2}}}\| \nabla S(\cdot, t_1) \|_{L^2 (B_{R_0} \setminus \overline{\Omega}_{\frac{d_1}{2}})}, 
		\label{eq:S-S}
	\end{aligned} 
\end{equation}
where $c= \left(\frac{48}{ \pi}\right)^{\frac{1}{2}}$. To estimate $\omega_1$, we exploit the following estimate of propagation of smallness  \cite[Theorem 5.1]{Alessandrini_2009}:
\begin{equation}
	\| \nabla S(\cdot, t_1) \|_{L^2 (B_{R_0} \setminus \overline{\Omega}_{\frac{d_1}{2}})} \leq C_0 \|\nabla S(\cdot, t_1)\|_{L^2(B_{r_0}(\bar{x}))}^\theta \|\nabla S(\cdot, t_1) \|_{L^2(B_{2 R_0} \setminus \overline{\Omega}_{d_1})}^{1-\theta}.
	\label{eq:three-spheres} 
\end{equation}
for some $\theta \in (0,1)$ and $C_0>0$.

We first estimate $\|\nabla S(\cdot, t_1) \|_{L^2(B_{2 R_0} \setminus \overline{\Omega}_{d_1})}$. Notice that 
\[
\text{div}(\nabla S(x, t_1))  = -\rho_0\text{div}\left(\textbf{u}(x, t_1) \rchi_{\Omega_{d_1}}(x)\right).
\]
From \cite[Section 5.9.1]{evans1998partial}, we obtain
\begin{equation}
	\begin{aligned}
		\|S(\cdot, t_1)\|_{H^1(\mathbb{R}^3)} \leq C \rho_0\|\text{div}\left(\textbf{u}(\cdot, t_1) \rchi_{\Omega_{d_1}}\right)\|_{H^{-1}(\mathbb{R}^3)} \leq C \rho_0 \left( \int_{\Omega_{d_1}} |\textbf{u}(\cdot, t_1)|^2\right)^{\frac{1}{2}}.	
	\end{aligned}
	\label{eq:S-u}
\end{equation}
We then use the energy estimate of Proposition \ref{p:energy-1} for $\textbf{u}(\cdot, t_1)$. We have
\begin{equation}
	\int_{\Omega} |\textbf{u}(\cdot, t_1)|^2  \leq \frac{t_1^3 e^{t_1/\rho_0}}{\rho_0} \int_{\Omega} |\textbf{f}|^2.
	\label{eq:u-f}
\end{equation}
We now focus on the right-hand side of the inequality above. We write
\begin{equation*}
	\textbf{f}(x) = - (M^{(2)} - M^{(1)}) q'(|x-P^{(2)}|) \frac{(x-P^{(2)})}{|x-P^{(2)}|} - M^{(1)} (\nabla (q(|x-P^{(2)}|)) - \nabla(q(|x-P^{(1)}|))).
\end{equation*}
Since $|M^{(1)}|,|M^{(2)}|\leq M_0$, we get
\begin{equation}
	\int_{\Omega} | \textbf{f}|^2 \leq 2 |M^{(2)}-M^{(1)}|^2 \int_{ \Omega} (q'(|x-P^{(2)}|))^2 + 2 {M}_0^2 \int_{ \Omega} \left|\nabla (q(|x-P^{(2)}|)) - \nabla(q(|x-P^{(1)}|))\right|^2.
	\label{eq:f-P-Q}
\end{equation}
Notice that
\[
\left|\partial_j (q(|x-P^{(2)}|)) - \partial_j(q(|x-P^{(1)}|))\right| = \left(\int_0^1 \left|\nabla \partial_j (q(|x-(P^{(1)} +(P^{(2)}-P^{(1)})\eta)|))\right| \, \text{d}\eta\right) |P^{(2)}-P^{(1)}|, 
\]
hence
\[
\int_{\Omega} \left|\partial_j (q(|x-P^{(2)}|)) - \partial_j(q(|x-P^{(1)}|))\right|^2 \! \!  \! \leq \!  |P^{(2)}-P^{(1)}|^2  \int_0^1 \! \! \int_\Omega \left|\nabla \partial_j (q(|x-(P^{(1)} +(P^{(2)}-P^{(1)})\eta)|))\right|^2.
\]
From \eqref{eq:f-P-Q}, we have
\begin{equation*}
	\left(\int_{\Omega} | \textbf{f}|^2 \right)^{\frac{1}{2}} \leq C_1 |M^{(2)}-M^{(1)}| + C_2 |P^{(2)}-P^{(1)}|,
\end{equation*}
where
\[
C_1 := \left( \int_{B_{\frac{d_0}{2}}} (q'(|x-P^{(2)}|))^2\right)^\frac{1}{2},
\]
\[
C_2:= \left( {M}_0^2 \int_0^1  \int_\Omega \left|\nabla \partial_j (q(|x-(P^{(1)} +(P^{(2)}-P^{(1)})\eta)|))\right|^2 \right)^\frac{1}{2}.
\]
From \eqref{eq:u-f}, we obtain
\begin{equation*}
	\left(\int_{\Omega} |\textbf{u}(\cdot, t_1)|^2 \right)^\frac{1}{2}\leq \sqrt{\frac{t_1^3 e^{t_1/\rho_0}}{\rho_0}} \left(C_1 |M^{(2)}-M^{(1)}| + C_2 |P^{(2)}-P^{(1)}|\right),
\end{equation*}
and, from \eqref{eq:S-u}, we have
\begin{equation*}
	\|\nabla S(\cdot, t_1) \|_{L^2(B_{2 R_0} \setminus \overline{\Omega}_{d_1})} \leq C_3 \left(|M^{(2)}-M^{(1)}| + |P^{(2)}-P^{(1)}|\right),
\end{equation*}
where 
\[
C_3 := C \sqrt{\rho_0 t_1^3 e^{t_1/\rho_0}} \max\{C_1,C_2\}.
\]
Estimate \eqref{eq:three-spheres} then gives
\begin{equation*}
	\| \nabla S(\cdot, t_1) \|_{L^2 (B_{R_0} \setminus \overline{\Omega}_{\frac{d_1}{2}})} \leq C_0 C_3 \varepsilon^\theta \left(|M^{(2)}-M^{(1)}| + |P^{(2)}-P^{(1)}|\right)^{1-\theta}.
\end{equation*}
Using \eqref{eq:S-S}, we finally get
\begin{equation}
	\omega_1 \leq \frac{c C_0 C_3}{d_1^{\frac{3}{2}}}  \varepsilon^\theta \left(|M^{(2)}-M^{(1)}| + |P^{(2)}-P^{(1)}|\right)^{1-\theta}.
	\label{eq:omega-1}
\end{equation}
In fact, the same inequality holds for
\begin{equation}
	\|\nabla S(\cdot, t_1)\|_{L^\infty (B_{R_0}\setminus {\Omega})} \leq \frac{c C_0 C_3}{d_1^{\frac{3}{2}}}  \varepsilon^\theta \left(|M^{(2)}-M^{(1)}| + |P^{(2)}-P^{(1)}|\right)^{1-\theta}. 
	\label{S-L-infty} 
\end{equation}

We now estimate 
\[
\omega_2 :=\| S(\cdot, t_1) - S(x_0,t_1)\|_{L^\infty (\partial \Omega)}, \quad \text{fixed some } x_0 \in B_{R_0} \setminus \overline{\Omega}. 
\] 
For any $x \in \partial \Omega$, we notice that 
\[
S(x, t_1) - S(x_0,t_1) = \int_0^{s_0} \frac{\text{d}}{\text{d} s} S(\gamma(s), t_1) = \int_0^{s_0} \nabla S(\gamma(s), t_1) \gamma'(s)  , 
\]
hence, by \eqref{S-L-infty},
\begin{equation*}
	\begin{aligned} 
		|S(x, t_1) - S(x_0,t_1)| & \leq   \|\nabla S(\cdot, t_1)\|_{L^\infty (B_{R_0}  \setminus \Omega)} \left|\int_0^{s_0} \gamma'(s)\right| \\ & \leq s_0\frac{c C_0 C_3}{d_1^{\frac{3}{2}}}  \varepsilon^\theta \left(|M^{(2)}-M^{(1)}| + |P^{(2)}-P^{(1)}|\right)^{1-\theta},
	\end{aligned}  
\end{equation*}
where $\gamma$ is an arc-length parameterized curve such that $\gamma(0) = x_0$ and $\gamma(s_0) = x$. Since $\Omega$ is convex, there exists $K>0$ depending only on the diameter of $\Omega$ and $R_0$ such that \[ 
s_0\leq K, \quad \forall x_0  \in B_{R_0} \setminus \overline{ \Omega}, \quad \forall x \in \partial \Omega,
\]
hence
\begin{equation}
	\omega_2 \leq K \frac{c C_0 C_3}{d_1^{\frac{3}{2}}}  \varepsilon^\theta \left(|M^{(2)}-M^{(1)}| + |P^{(2)}-P^{(1)}|\right)^{1-\theta}. 
	\label{eq:omega-2}
\end{equation}  
Finally, putting \eqref{eq:f-omega1-omega2}, \eqref{eq:omega-1} and 
\eqref{eq:omega-2} together, we find that
\begin{equation}
	\left|\int_{\Omega} \textbf{f} \cdot \nabla \phi \right| \leq  \varepsilon_1 \|\phi \|_{C^1(\partial \Omega)},
	\label{eq:f-epsilon}	
\end{equation} 
where
\begin{equation}
	\varepsilon_1 := \frac{2 C_4}{t_1^2} \varepsilon^\theta (|M^{(2)}-M^{(1)}|+|P^{(2)}-P^{(1)}|)^{1-\theta},
	\label{eq:varepsilon}
\end{equation}
and
\[
C_4:= \frac{c C_0 C_3 }{d_1^{\frac{3}{2}}} |\partial \Omega|  \max\left\{  1, K\right\}.
\]

We are now able to propagate the smallness of the data directly into $|M^{(2)}-M^{(1)}|$ and $|P^{(1)}-P^{(2)}|$ by using the definition of $\textbf{f}$: 
\begin{equation*}
	\int_{\Omega} \textbf{f} \cdot \nabla \phi = \int_{B_{\frac{d_0}{2}}(P^{(1)})} \frac{q' (|x-P^{(1)}|)}{|x-P^{(1)}|} M^{(1)}(x-P^{(1)}) \cdot \nabla \phi - \int_{B_{\frac{d_0}{2}}(P^{(2)})} \frac{q' (|x-P^{(2)}|)}{|x-P^{(2)}|} M^{(2)}(x-P^{(2)}) \cdot \nabla \phi.
\end{equation*} 
In what follows, we retrace some calculations done in the uniqueness proof of Theorem \ref{t:uniqueness}, starting from equation \eqref{eq:test3} up to \eqref{eq:cond3}. 

\vspace{3mm}

\textbf{Step 1: Moment tensor.} We begin by recalling that, for $\phi=x_ix_j$, $i\neq j$, we have:
\[
\int_{\Omega} \textbf{f} \cdot \nabla \phi = -2 M^{(2)}_{ij} + 2M^{(1)}_{ij}.
\]
By \eqref{eq:f-epsilon}, we thus find
\begin{equation}
	\left|M^{(2)}_{ij} - M^{(1)}_{ij}\right| \leq C_5 \frac{\varepsilon_1}{2}, \quad i,j=1,2,3, \quad i\neq j,
	\label{eq:Mij}
\end{equation}
where $C_5$ refers to a constant  greater than $\|\phi\|_{\partial \Omega}$ for any choices of $\phi$ we will make throughout the remainder of the proof. By taking $\phi = (x_1^2-x_2^2)/2$, $\phi = (x_1^2-x_3^2)/2$, and $\phi = (x_2^2-x_3^2)/2$, we also get
\begin{equation*}
	\left|(M^{(2)}_{11} - M^{(1)}_{11}) - (M^{(2)}_{22} - M^{(1)}_{22})\right| \leq C_5 \varepsilon_1,
\end{equation*}
\begin{equation*}
	\left|(M^{(2)}_{11} - M^{(1)}_{11}) - (M^{(2)}_{33} - M^{(1)}_{33})\right| \leq C_5 \varepsilon_1,
\end{equation*}
and
\begin{equation*}
	\left|(M^{(2)}_{22} - M^{(1)}_{22}) - (M^{(2)}_{33} - M^{(1)}_{33})\right| \leq C_5 \varepsilon_1.
\end{equation*}
We can write
\begin{equation*}
	M^{(2)}_{11} - M^{(1)}_{11} = M^{(2)}_{22} - M^{(1)}_{22} + \sigma_1,
\end{equation*}
\begin{equation*}
	M^{(2)}_{22} - M^{(1)}_{22} = M^{(2)}_{33} - M^{(1)}_{33} + \sigma_2,
\end{equation*}
and
\begin{equation*}
	M^{(2)}_{33} - M^{(1)}_{33} = M^{(2)}_{11} - M^{(1)}_{11} + \sigma_3,
\end{equation*}
where
\[
|\sigma_i| \leq C_5 \varepsilon_1, \quad i=1,2,3.
\]
Using the results above, together with tr$M^{(1)}=\text{tr}M^{(2)}=0$:
\[
\left(M_{11}^{(2)} - M_{11}^{(1)}\right) + \left(M_{22}^{(2)} - M_{22}^{(1)}\right) + \left(M_{33}^{(2)} - M_{33}^{(1)}\right)=0,
\]
we find
\begin{equation}
	|M_{ii}^{(2)} - M_{ii}^{(1)}| \leq C_5 \frac{\varepsilon_1}{3}, \quad i=1,2,3.
	\label{eq:Mii}
\end{equation} 
Putting \eqref{eq:Mij} and \eqref{eq:Mii} together, we finally get
\begin{equation}
	|M^{(2)} - M^{(1)}| \leq C_6 \varepsilon_1,
	\label{estimate-M}
\end{equation}
where 
\[
C_6 = \frac{5\sqrt{3}}{6}C_5.
\]

\vspace{3mm}

\textbf{Step 2: Location of the source.} We begin by noticing that estimate \eqref{estimate-M} implies
\[
M^{(2)} = M^{(1)} + M^{\sigma}, 
\]
where $M^{\sigma} \in \mathbb{M}^3$ and
\[
|M^{\sigma}| \leq C_6 \varepsilon_1.
\]
Repeating the same calculations done in the uniqueness proof of Theorem \ref{t:uniqueness} yields
\[
6(2M_{22}^{(1)}+ M_{33}^{(1)}) (P^{(1)} - P^{(2)})_1 + 12 M^{(1)}_{12} (P^{(1)} - P^{(2)})_2 = 6 (2 M^{\sigma}_{22} + M^{\sigma}_{33}) (P^{(2)})_1 + 12 M^{\sigma}_{12} (P^{(2)})_2 + \int_{ \Omega} \textbf{f} \cdot \nabla \phi 
\]
for $\phi=x_1^3-3x_2^2x_1$. Hence
\[
\left|(2M_{22}^{(1)}+ M_{33}^{(1)}) (P^{(1)} - P^{(2)})_1 + 2 M^{(1)}_{12} (P^{(1)} - P^{(2)})_2\right| \leq C_7 \varepsilon_1, 
\]
where $C_7$ depends on the diameter of $\Omega$. Analogously, by taking $\phi = x_2^3-3x_1^2x_2$, we obtain
\[
\left|(2M_{22}^{(1)}+ M_{33}^{(1)}) (P^{(1)} - P^{(2)})_2 + 2 M^{(1)}_{12} (P^{(1)} - P^{(2)})_1\right| \leq C_7 \varepsilon_1. 
\]
We can then write the following system:
\begin{equation*}
	\begin{cases}
		(2M_{22}^{(1)}+ M_{33}^{(1)}) (P^{(1)} - P^{(2)})_1 + 2 M^{(1)}_{12} (P^{(1)} - P^{(2)})_2 = \tilde{\sigma}_1, \vspace{2mm}\\
		2 M^{(1)}_{12} (P^{(1)} - P^{(2)})_2 - (2M_{22}^{(1)}+ M_{33}^{(1)}) (P^{(1)} - P^{(2)})_2  = \tilde{\sigma}_2,
	\end{cases}
\end{equation*}
where 
\[
|\tilde{\sigma}_1|, |\tilde{\sigma}_2| \leq C_7 \varepsilon_1.
\]
Calculating the determinant gives
\[
-\left( (2M^{(1)}_{22} + M^{(1)}_{33})^2 + 4 (M^{(1)}_{12})^2\right),
\]
hence
\begin{equation*}
	(P^{(1)} - P^{(2)})_1 = 
	\frac{\begin{vmatrix} \tilde{\sigma}_1 & 2M_{12}^{(1)}\\
			\tilde{\sigma}_2 & 2 M_{22}^{(1)} + M_{33}^{(1)} 
	\end{vmatrix}}{-\left( (2M^{(1)}_{22} + M^{(1)}_{33})^2 + 4 (M^{(1)}_{12})^2\right)}. 
	\label{eq:det1} 
\end{equation*}
We have
\begin{equation*}
	\left| \left( (2M^{(1)}_{22} + M^{(1)}_{33})^2 + 4 (M^{(1)}_{12})^2\right) (P^{(1)} - P^{(2)})_1 \right| \leq 5 C_7 M^2_0 \varepsilon_1
\end{equation*}
Repeating the same calculations done above for $\phi = x_1^3-3x_3^2x_1$, $\phi = x_3^3-3x_3x_1^2$ and $\phi = x_1 x_2 x_3$ yields
\begin{equation*}
	\left| \left( (M^{(1)}_{22} + 2M^{(1)}_{33})^2 + 4 (M^{(1)}_{13})^2\right) (P^{(1)} - P^{(2)})_1 \right| \leq 5C_7 M^2_0 \varepsilon_1,
\end{equation*}
and
\begin{equation*}
	\left| (M^{(1)}_{32})^2 (P^{(1)} - P^{(2)})_1 \right| \leq C_7 M^2_0 \varepsilon_1.
\end{equation*}
By putting everything together and using tr$M^{(1)} = 0$, we obtain:
\begin{equation}
	\begin{cases}
		\left| \left( (M^{(1)}_{11} - M^{(1)}_{22})^2 + 4 (M^{(1)}_{12})^2\right) (P^{(1)} - P^{(2)})_1 \right| \leq C_8 \varepsilon_1, \vspace{2mm}\\
		\left| \left( (M^{(1)}_{33} - M^{(1)}_{11})^2 + 4 (M^{(1)}_{13})^2\right) (P^{(1)} - P^{(2)})_1 \right| \leq C_8 \varepsilon_1, \vspace{2mm}\\ 
		\left| M^{(1)}_{32} (P^{(1)} - P^{(2)})_1 \right| \leq C_8 \varepsilon_1.
	\end{cases}
	\label{eq:system-P}
\end{equation}
where $C_8 = 5 C_7 M_0^2$. 
We set $\lambda >0$ such that
\begin{equation}
	\begin{cases} (M^{(1)}_{11} - M^{(1)}_{22})^2 + 4 (M^{(1)}_{12})^2 \leq \lambda^2, \vspace{2mm}\\
		(M^{(1)}_{33} - M^{(1)}_{11})^2 + 4 (M^{(1)}_{13})^2 \leq \lambda^2, \vspace{2mm} \\
		(M^{(1)}_{32})^2 \leq \lambda^2.
	\end{cases}
	\label{eq:lambda-sys}
\end{equation}
We first want to show that 
\begin{equation}
	|M_{ij}^{(1)}| \leq \lambda, \quad i,j=1,2,3.
	\label{eq:M-lambda}
\end{equation}
System \eqref{eq:lambda-sys} gives
\[
|M^{(1)}_{12}| \leq \frac{\lambda}{2}, \quad |M^{(1)}_{13}| \leq \frac{\lambda}{2}, \quad |M_{32}^{(1)}| \leq \lambda,
\]
and
\[
|M^{(1)}_{11} - M^{(1)}_{22}| \leq \lambda , \quad |M^{(1)}_{33} - M^{(1)}_{11}| \leq \lambda,
\]
that is
\[
M^{(1)}_{22} = - \tilde{\lambda}_1 + M^{(1)}_{11}, \quad  M^{(1)}_{33} = - \tilde{\lambda}_2 + M^{(1)}_{11},
\]
with $|\tilde{\lambda}_1|, |\tilde{\lambda}_2| \leq \lambda$. Using tr$M^{(1)}=0$, the equations above yield
\[
3 M_{11}^{(1)}- \tilde{\lambda}_1 - \tilde{\lambda}_2 =0,
\]
hence
\[
|M_{11}^{(1)}|\leq \frac{2}{3} \lambda. 
\]
Since the same calculations can be done for $M_{22}^{(1)}$ and $M_{33}^{(1)}$, we proved \eqref{eq:M-lambda}, hence
\[
|M|^2 \leq 9 \lambda^2.
\]
We recall now that \eqref{eq:M-bound} implies $|M|^2 \geq m_0^2$, hence
\[
\lambda \geq \frac{m_0}{3}.
\]
This means that if we set
\[
\lambda = \frac{1}{2} \frac{m_0}{3}
\]
in \eqref{eq:lambda-sys}, at least one of the coefficients of the system must be greater than  $\frac{1}{2} \frac{m_0}{3}$.
By \eqref{eq:system-P}, we have
\[
\left|(P^{(1)} - P^{(2)})_1 \right| \leq \frac{6C_8}{m_0} \varepsilon_1.
\]
By repeating the same calculations for $(P^{(1)} - P^{(2)})_2$, we finally obtain
\[
\left|P^{(1)} - P^{(2)} \right| \leq \frac{6C_8}{m_0} \varepsilon_1.
\]
Putting everything together yields
\begin{equation*}
	\left|P^{(1)} - P^{(2)} \right| + \left|M^{(1)} - M^{(2)} \right| \leq \frac{C_9}{m_0} \varepsilon_1,
\end{equation*}
where
\[
C_9 = m_0 C_6 + \frac{6C_8}{m_0}.
\]
We now recall from \eqref{eq:varepsilon} that
\[
\varepsilon_1 = \frac{2 C_4}{t_1^2} \varepsilon^\theta (|M^{(2)}-M^{(1)}|+|P^{(2)}-P^{(1)}|)^{1-\theta},
\]
hence
\begin{equation}
	\left|P^{(1)} - P^{(2)} \right| + \left|M^{(1)} - M^{(2)} \right| \leq \frac{C_{10}}{m_0} \varepsilon^\theta (|M^{(2)}-M^{(1)}|+|P^{(2)}-P^{(1)}|)^{1-\theta},
	\label{eq:estimate-P-M-fin}
\end{equation}
where
\[
C_{10} = \frac{2 C_4}{t_1^2} C_9.
\]
From \eqref{eq:estimate-P-M-fin}, we finally get the desired Lipschitz stability estimate
\[
\left|P^{(1)} - P^{(2)} \right| + \left|M^{(1)} - M^{(2)} \right| \leq \left(\frac{C_{10}}{m_0} \right)^{\frac{1}{\theta}}  \varepsilon.
\] 
\end{proof}

\section{Concluding remarks}
In this paper we have studied an early-warning inverse source problem for the elasto-gravitational equations that is motivated by seismology. The problem involves a mixed hyperbolic-elliptic system of partial differential equations describing elastic wave displacement and gravity perturbations produced by a source in a homogeneous bounded medium. Within the Cowling approximation, we have shown how to turn the so-called elasto-gravitational coupling to our advantage: the changes of the gravitational field are generated instantaneously by Poisson's equation and thus can be used to solve the early-warning inverse problem without having to impose a minimum on the time needed to determine the source. 

This paper is the first step towards developing the theoretical framework of the PEGS inverse problem. Proving uniqueness and stability theorems for the self-gravitating elastic equations will be the object of future research. 

\appendix

\section{Energy estimates} 
\label{sec:appA}

\subsection{Proof of proposition \ref{p:energy-1}}
\begin{proof}[\unskip\nopunct]
We begin by multiplying both sides of the first equation of the system \eqref{eq:elastic-dp} by $\textbf{u}_t$. We get
\[
\rho_0 \textbf{u}_{tt} \cdot \textbf{u}_t - \text{div}(\mathbb{C}\nabla \textbf{u}) \cdot \textbf{u}_t = \textbf{f} \cdot \textbf{u}_t,
\]
hence
\[
\frac{\rho_0}{2} \partial_t|\textbf{u}_t|^2 + \frac{1}{2} \partial_t (\mathbb{C} \nabla \textbf{u} \cdot \nabla \textbf{u}) - \text{div}(\mathbb{C} \nabla \textbf{u} \cdot \textbf{u}_t) = \textbf{f} \cdot \textbf{u}_t.
\]
We integrate the equation above over $\Omega \times [0,s]$. Since $\mathbb{C} \nabla \textbf{u} \cdot \nu =0$ on $\partial \Omega$, we obtain
\[
\frac{1}{2} \int_{\Omega} \left(\rho_0|\textbf{u}_t(x,s)|^2  + \mathbb{C} \nabla \textbf{u}(x,s) \cdot \nabla \textbf{u}(x,s)\right) \, \text{d}x = \int_0^s \int_{\Omega} \textbf{f}(x) \cdot \textbf{u}_t(x,t) \, \text{d}x  \text{d}t,
\]	
hence
\[
\int_{\Omega} |\textbf{u}_t(x,s)|^2 \, \text{d}x \leq \frac{1}{\rho_0} \left(\int_0^s \int_{\Omega} \left|\textbf{f}(x)\right|^2 \, \text{d}x  \text{d}t + \int_0^s \int_{\Omega} \left|\textbf{u}_t(x,t)\right|^2 \, \text{d}x  \text{d}t\right).
\]		
By Gr\"onwall's inequality, we find
\begin{equation}
	\int_{\Omega} |\textbf{u}_t(x,s)|^2 \, \text{d}x \leq \frac{1}{\rho_0} \left(\int_0^s \int_{\Omega} |\textbf{f}(x)|^2  \, \text{d}x\text{d}t\right) e^{s/\rho_0}.
	\label{eq:gronwall}
\end{equation}
Since $\textbf{u}(x,0)=0$, we have
\[
|\textbf{u}(x,\tau)| \leq \int_0^\tau |\textbf{u}_t(x, s)|\, \text{d}s \leq \sqrt{\tau} \left(\int_0^\tau |\textbf{u}_t(x,s)|^2 \, \text{d}s \right)^{\frac{1}{2}},
\]	
hence,
by \eqref{eq:gronwall}, we get
\begin{equation*}
	\begin{aligned}
		\int_{ \Omega}|\textbf{u}(x,\tau)|^2 \, \text{d}x & \leq \tau \int_0^\tau \int_{ \Omega} |\textbf{u}_t(x,s)|^2 \, \text{d}x \text{d}s \\ 
		& \leq \tau \int_0^\tau \frac{1}{\rho_0} \left(\int_0^s \int_{\Omega} |\textbf{f}(x)|^2  \, \text{d}x\text{d}t\right) e^{s/\rho_0} \, \text{d}s\\
		& \leq \frac{\tau e^{\tau/\rho_0}}{\rho_0} \int_0^\tau \left(\int_0^s \int_{\Omega} |\textbf{f}(x)|^2  \, \text{d}x\text{d}t\right) \, \text{d}s
	\end{aligned}	
\end{equation*}
hence
\begin{equation*}
	\int_{ \Omega}|\textbf{u}(x,\tau)|^2 \, \text{d}x \leq \frac{\tau^3 e^{\tau/\rho_0}}{\rho_0} \int_{\Omega} |\textbf{f}(x)|^2  \, \text{d}x.
\end{equation*}

\end{proof} 

\subsection{Proof of proposition \ref{p:energy}}

\begin{proof}[\unskip\nopunct]

Define $\textbf{v} := \textbf{u} \, e^{-\gamma t}$, $\gamma >0$. It is easy to check that
\[
\textbf{u}_t = (\textbf{v}_t + \gamma \textbf{v}) e^{\gamma t},
\]
\[
\textbf{u}_{tt} = (\textbf{v}_{tt} + 2 \gamma \textbf{v}_t + \gamma^2 \textbf{v}) e^{\gamma t},
\]
hence, if $\textbf{u}$ solves \eqref{eq:elastic-dp}, we have that $\textbf{v}$ solves
\[
\rho_0 \left(\textbf{v}_{tt} + 2 \gamma \textbf{v}_{t} + \gamma^2 \textbf{v}\right)-\text{div} (\mathbb{C} \nabla \textbf{v})=\textbf{f}\, e^{-\gamma t}.
\]
After multiplying both sides of the above equation by $\textbf{v}_t$, we obtain:
\[\frac{\rho_0}{2}\left( \partial_t |\textbf{v}_t|^2 +4 \gamma |\textbf{v}_t|^2 + {\gamma^2}\partial_t |\textbf{v}|^2\right) - \partial_j (C_{ijkl} \partial_k v_l) v_{i,t} =\textbf{f} \cdot  \textbf{v}_t \, e^{-\gamma t}.
\]
We integrate the equation above over ${\widetilde{K}}$. We write
\begin{equation}
	\begin{aligned} 
		\int_{\widetilde{K}  } \partial_t \left(\frac{\rho_0}{2} |\textbf{v}_t|^2 + \rho_0 \frac{\gamma^2}{2} |\textbf{v}|^2\right) -  \partial_j (C_{ijkl} \partial_k v_l) v_{i,t}  + 2\rho_0 \gamma\int_{\widetilde{K}} |\textbf{v}_t|^2 = \int_{\widetilde{K}} \textbf{f} \cdot  \textbf{v}_t \, e^{-\gamma t} .
		\label{eq:int-1}
	\end{aligned} 
\end{equation}
We want to prove that, if we set 
\[
I_1 := \int_{\widetilde{K}} \partial_t \left(\frac{\rho_0 }{2} |\textbf{v}_t|^2 + \rho_0 \frac{\gamma^2}{2} |\textbf{v}|^2\right) -  \partial_j (C_{ijkl} \partial_k v_l) v_{i,t},
\]
then $I_1\geq 0$ for a precise choice of the parameter $\alpha$. We first observe that
\[
\begin{aligned} 
	\int_{\widetilde{K}} \partial_j (C_{ijkl} \partial_k v_l) v_{i,t} & =  \int_{\widetilde{K}} \partial_j (C_{ijkl} \partial_k v_l v_{i,t}) - C_{ijkl} \partial_k v_l \partial_j v_{i,t}\\ & = \int_{\widetilde{K}} \partial_j (C_{ijkl} \partial_k v_l v_{i,t}) - \frac{1}{2} \partial_t (C_{ijkl} \partial_k v_l \partial_j  v_i).
\end{aligned} 
\]
Let us set 
\[
\Gamma_1 := (\Omega \times \{0\}) \cap K, \quad 
\Gamma_2 := (\Omega \times (0,\infty)) \cap \partial K, \quad 
\Gamma_3 := (\partial \Omega \times (0,\infty)) \cap K.
\]
Notice that
\[\partial \widetilde{K} = 
\Gamma_1 \cup \Gamma_2 \cup \Gamma_3.\] 
Thanks to the regularity provided by Theorem \ref{t:regularity}, we employ the divergence theorem and obtain 
\[
\int_{\partial \widetilde{K}} \left(\frac{\rho_0}{2} |\textbf{v}_t|^2 + \rho_0 \frac{\gamma^2}{2} |\textbf{v}|^2 + \frac{1}{2} C_{ijkl} \partial_k v_l \partial_j  v_i\right) (\textbf{N}\cdot \textbf{e}_t)-  \left(C_{ijkl} \partial_k v_l v_{i,t}  \right)(\textbf{N}\cdot \textbf{e}_j) ,
\]
where $\textbf{N}$ is the outward normal vector to $\partial{\widetilde{K}}$. It is defined as follows:
\begin{itemize}
	\item[(i)] on $\Gamma_1$: $\textbf{N}=(0,0,0,-1)$;
	\item[(ii)] on $\Gamma_2$: $\textbf{N} =\frac{1}{\sqrt{1+\alpha^2}}\left(\frac{\alpha (x-x_0)}{|x-x_0|},1\right)$;
	\item[(iii)] on $\Gamma_3$: $\textbf{N}=(\nu,0)$, where $\nu$ is the outward normal vector to $\partial \Omega$.
\end{itemize}
Since $\textbf{v}= \textbf{v}_t =0$ on $\Gamma_1$ and $(\mathbb{C} \nabla \textbf{v}) \cdot \nu =0$ on $\Gamma_3$, we can write
\[
\begin{aligned}
	I_1=\frac{1}{2\sqrt{1+\alpha^2}} \int_{\Gamma_2}  \underbrace{\rho_0 |\textbf{v}_t|^2 + \rho_0 \gamma^2 |\textbf{v}|^2 + C_{ijkl} \partial_k v_l \partial_j v_i - 2 \alpha \left(C_{ijkl} \partial_k v_l v_{i,t}  \right) \frac{(x-x_0)_j}{|x-x_0|}}_{\mathlarger{I_2}}.
\end{aligned}
\] 
We now define
\[
\xi:= \textbf{v}_t,  \quad  \quad {A} := \frac{\nabla \textbf{v} + (\nabla \textbf{v})^\top}{2}, \quad \eta:=\frac{(x-x_0)}{|x-x_0|}.
\]
Since 
\[\frac{\rho_0  \gamma^2}{2\sqrt{1 + \alpha^2}} \int_{\Gamma_2} |\textbf{v}|^2\geq 0,
\] 
we have
\[
I_2 \geq \rho_0 |\xi|^2 + \lambda_0 (\text{tr} {A})^2 + 2 \mu_0 |{A}|^2 - 2\alpha (\lambda (\text{tr} {A}) \xi \cdot \eta + 2 \mu_0 {A} \xi \cdot \eta). 
\]
Since $|\eta|=1$ and \[
\rho_0>0, \quad \lambda_0>0, \quad \mu_0 > 0,
\]
we have, for every $\epsilon >0$,
\[
\begin{aligned} 
	I_2 & \geq \rho_0 |\xi|^2 + \lambda_0 (\text{tr}{A})^2 + 2\mu_0 |{A}|^2 -2\alpha \left(\lambda_0 \left( \frac{1}{2\epsilon} (\text{tr}{A})^2 + \frac{\epsilon}{2} |\xi|^2\right) + 2 \mu_0 \left(\frac{1}{2\epsilon} |{A}|^2 + \frac{\epsilon}{2} |\xi|^2\right)\right)\\ 
	& \geq  |\xi|^2 (\rho_0 - \lambda_0 \epsilon \alpha - 2\mu_0 \epsilon \alpha ) + (\text{tr} {A})^2 \left(\lambda_0 - \frac{\alpha \lambda_0}{\epsilon}\right) + |{A}|^2 \left(2\mu_0 - \frac{2\alpha \mu_0}{\epsilon}\right)\\
	& \geq |\xi|^2 (\rho_0 -  \epsilon \alpha (\lambda_0+ 2\mu_0) ) + \left(1 - \frac{\alpha}{\epsilon}\right) (\lambda_0 (\text{tr}{A})^2 + 2\mu_0 |{A}|^2).
\end{aligned} 
\]
Finally, for $\alpha=\alpha_0$ and $\epsilon=\alpha_0$, we obtain
\[
I_2 \geq |\xi|^2 (\rho_0 - \alpha^2_0 (\lambda_0 + 2\mu_0) ) \geq 0,
\]
which implies that 
\[
I_1 \geq 0.
\]
This means that, by \eqref{eq:int-1}, we have 
\begin{equation}
	2\rho_0 \gamma \int_{\widetilde{K}}  |\textbf{v}_t|^2 \leq I_1 + 2\rho_0 \gamma \int_{\widetilde{K}} |\textbf{v}_t|^2 =\int_{\widetilde{K}} \textbf{f} \cdot \textbf{v}_t \, e^{-\gamma t}. 
	\label{eq:ineq-1}
\end{equation}
By the Cauchy-Schwarz inequality, we get
\[
\int_{\widetilde{K}} \textbf{f} \cdot \textbf{v}_t \, e^{-\gamma t} \leq \left(\int_{\widetilde{K}} |\textbf{f}|^2 e^{-2\gamma t}\right)^{\frac{1}{2}} \left(\int_{\widetilde{K}} |\textbf{v}_t|^2\right)^{\frac{1}{2}},
\]
hence \eqref{eq:ineq-1} becomes:
\begin{equation}
	(2\rho_0 \gamma)^2 \int_{\widetilde{K}} |\textbf{v}_t|^2 \leq \int_{\widetilde{K}} |\textbf{f}|^2 e^{-2\gamma t}.
	\label{eq:ineq-2}
\end{equation}
Observe that, since $\textbf{v}(x,0)=0$ and using
again the Cauchy-Schwarz inequality, we have
\[
|\textbf{v}(x,t)|^2 \leq t \int_0^t |\textbf{v}_s (x,s)|^2 \, \text{d}s \leq \tau \int_0^t |\textbf{v}_s (x,s)|^2 \, \text{d}s
\]
since $t \leq \tau$.
Notice that  
\[
\widetilde{K} = \left\{ (x,t) \in \mathbb{R}^4 \text{ such that } x\in B_{\tau/\alpha_0}(x_0) \cap \Omega, \; 0 \leq t \leq \tau - \alpha_0 |x-x_0| \right\},
\]
hence
\[
\int_{\widetilde{K}} |\textbf{v}(x,t)|^2\, \text{d}x\text{d}  t\leq \tau \int_{B_{\tau/\alpha_0}(x_0) \cap \Omega} \left( \int_{0}^{\tau-\alpha_0|x-x_0|} \left( \int_0^t |\textbf{v}_s (x,s)|^2 \, \text{d}s\right) \, \text{d}t \right)  \text{d}x. 
\]
Since $t \leq \tau-\alpha_0|x-x_0|$, we write 
\begin{equation}
	\begin{aligned} 
		\int_{\widetilde{K}} |\textbf{v}(x,t)|^2\, \text{d}x\text{d}  t& \leq \tau \int_{B_{\tau/\alpha_0}(x_0) \cap \Omega} (\tau-\alpha_0|x-x_0|)\left( \int_{0}^{\tau-\alpha_0|x-x_0|} |\textbf{v}_s (x,s)|^2 \, \text{d}s \right)  \text{d}x\\
		& 
		\leq \tau^2 \int_{\widetilde{K}}  |\textbf{v}_s (x,s)|^2 \, \text{d}x\text{d}s.  
	\end{aligned} 
	\label{eq:ineq-3} 
\end{equation}
Finally, putting together \eqref{eq:ineq-2} and \eqref{eq:ineq-3}, since $\textbf{v} = \textbf{u} \, e^{-\gamma t}$, we obtain \eqref{eq:energy-est}.

\end{proof}

\section*{Acknowledgments}
Lorenzo Baldassari was supported by the DOE under grant DE-SC0020345. Maarten V. de Hoop was supported by the Simons Foundation under the
MATH + X program, the National Science Foundation under grant DMS-2108175,
and the corporate members of the Geo-Mathematical Imaging Group at Rice University. Elisa Francini and Sergio Vessella were partially supported by the Italian Ministry of Education, University and Research (MIUR) research project 201758MTR2, Prin 2017.

\bibliographystyle{siam}
\bibliography{references}

\end{document}